\newtheorem{theorem}{Theorem}
\newtheorem{lemma}{Lemma}
\newtheorem{assumption}{Assumption}
\renewcommand{\ALG@name}{\sffamily\footnotesize Algorithm}
\newcommand{\pmprt}{\texttt{r-}MPR2 }
\newcommand{\minu}[1]{\min_{#1}}
\newcommand{\argminu}[1]{\text{argmin}{#1}}
\newcommand{\F}{\mathds{F}}
\newcommand{\R}{\mathds{R}}
\newcommand{\ffp}{\widehat{f}} 
\newcommand{\gfp}{\widehat{g}} 
\newcommand{\gd}{\widetilde{g}} 
\newcommand{\xkfp}[1]{\widehat{x}_{#1}} 
\newcommand{\skfp}[1]{\widehat{s}_{#1}} 
\newcommand{\ckfp}[1]{\widehat{c}_{#1}} 
\newcommand{\skact}[1]{\widetilde{s}_{#1}^a} 
\newcommand{\gkact}[1]{\widetilde{g}_{#1}^a} 
\newcommand{\sigkfp}[1]{\widehat{\sigma}_{#1}} 
\newcommand{\deltak}[2]{\delta_{#2}^{#1}} 
\newcommand{\fkfp}[1]{\widehat{f}_{#1}} 
\newcommand{\fkcfp}[1]{\widehat{f}_{#1}^+} 
\newcommand{\gkfp}[1]{\widehat{g}_{#1}} 
\newcommand{\ef}{\omega_f} 
\newcommand{\eg}{\omega_g} 
\newcommand{\approxT}{\overline{T}} 
\newcommand{\fl}{\mathop{\textup{fl}}} 
\newcommand{\pseudoT}{\widetilde{T}} 
\newcommand{\tsdifffp}{\widehat{\Delta T}} 
\newcommand{\wgaugbound}{\kappa_\mu} 
\newcommand{\xpreck}[1]{\pi_{#1}^x} 
\newcommand{\fpreck}[1]{\pi_{#1}^{f^+}} 
\newcommand{\cpreck}[1]{\pi_{#1}^c} 
\newcommand{\fprecrecompk}[1]{\pi_{#1}^{f}} 
\newcommand{\gpreck}[1]{\pi_{#1}^g} 
\newcommand{\dmixk}[1]{{\delta'_{#1}}} 
\newcommand{\umixk}[1]{{u'_{#1}}} 
\newcommand{\meps}{u} 
\newcommand{\mepsk}[2]{\meps^{#1}_{#2}} 
\newcommand{\mepsmax}{\meps_{max}} 
\newcommand{\mepsmin}{\meps_{min}} 
\begin{document} 

\GDcoverpage

\begin{GDtitlepage}

\begin{GDauthlist}
\GDauthitem{Dominique Monnet \ref{affil:gerad}\GDrefsep\ref{affil:poly}}
\GDauthitem{Dominique Orban \ref{affil:gerad}\GDrefsep\ref{affil:poly}}
\end{GDauthlist}

\begin{GDaffillist}
\GDaffilitem{affil:gerad}{GERAD, Montr\'eal (Qc), Canada, H3T 1J4}
\GDaffilitem{affil:poly}{Polytechnique Montr\'eal, Montr\'eal (Qc), Canada, H3T 1J4}
\end{GDaffillist}

\begin{GDemaillist}
\GDemailitem{dominique.monnet@polymtl.ca}
\GDemailitem{dominique.orban@polymtl.ca}
\end{GDemaillist}

\end{GDtitlepage}


\GDabstracts

\begin{GDabstract}{Abstract}
We propose a multi-precision extension of the Quadratic Regularization (R2) algorithm that enables it to take advantage of low-precision computations, and by extension to decrease energy consumption during the solve. 
The lower the precision in which computations occur, the larger the errors induced in the objective value and gradient, as well as in all other computations that occur in the course of the iterations.
The Multi-Precision R2 (MPR2) algorithm monitors the accumulation of rounding errors with two aims: to provide guarantees on the result of all computations, and to permit evaluations of the objective and its gradient in the lowest precision possible while preserving convergence properties.  MPR2's numerical results show that single precision offers enough accuracy for more than half of objective evaluations and most of gradient evaluations during the algorithm's execution. However, MPR2 fails to converge on several problems of the test set for which double precision does not offer enough precision to ensure the convergence conditions before reaching a first order critical point. That is why we propose a practical version of MPR2 with relaxed conditions which converges for almost as many problems as R2 and potentially enables to save about 50 \% time and 60\% energy for objective evaluation and 50 \% time and 70\% energy for gradient evaluation.

\paragraph{Keywords: }
Unconstrained optimization, quadratic regularization, multi-precision, rounding error analysis
\end{GDabstract}







\GDarticlestart

\section{Introduction}\label{sec1}

Over the recent years, the rise of Deep Neural Networks (DNN) and the continuous increase of the size of the networks has motivated researches to reduce the training cost.
This cost includes, for instance, the training time and the energy dissipation.
A natural way to do that is to perform the computations in low precision~\cite{sun2020ultra,wang2018training}, since dividing by two the number of digits roughly divide by two the computation time and by four the energy consumption~\cite{galal2010energy}.
Low-precision hardware architectures have been developed to serve the purpose of DNN fast training at lower energy cost.
In this context, it becomes possible to use several precision levels, and to extend the DNN training methods to multi-precision.

DNN training, \emph{i.e.}, minimizing the loss function, is a non-linear optimization problem.
Using low-precision computations come at the price of evaluating the objective function the gradient, and possibly higher derivative orders, with errors.
Convergence of non-linear algorithms under such evaluation errors has received attention in the recent years.
For the Trust Region (TR) methods, the state-of-the-art reference~\cite{conn2000trust} already includes the convergence conditions to inexact objective and gradient evaluations with deterministic error bounds.
These results were recently extended to the multi-precision case in~\cite{gratton2020note}, which proves the convergence of TR if infinite precision evaluations can be performed.
\cite{sun2022trust} consider TR methods with deterministic error bounds, and proposes to relax the acceptance step condition by including the error bounds.
This enables to keep the TR radius large, but the convergence only guaranteed to the neighborhood of a critical point.
TR convergence is also analyzed with probabilistic error bounds.
In this context, the convergence is provided in the sens of a probability which increases with the number of iteration.
TR methods are proved to converge with a probability 1 with exact objective function evaluation and probabilistic fully-linear model (which includes inexact order 1 and 2 derivative evaluations) in~\cite{bandeira2014convergence,gratton2018complexity}.
More recent works study convergence of TR by further including probabilistic error bounds on the objective function~\cite{blanchet2019convergence,sun2022trust,cao2022first}.



In the recent years, regularization methods have emerged as an alternative to TR.
These methods are based on local Taylor expansion models, to which is added a regularization parameters that controls the step size~\cite{cartis2012adaptive,cartis2018worst}, and have the same complexity as TR methods.
This complexity is $O(\epsilon^{-2})$ with $\epsilon$ the tolerance on the first order condition.
Recent works extend regularization methods' proof of convergence to inexact evaluation of objective function and gradient with deterministic and probabilistic error bounds~\cite{bellavia2018deterministic}, and deterministic bounds in the context of least-squares problems~\cite{bellavia2018levenberg}.
In particular, \cite{bellavia2018deterministic} can be viewed as the multi-precision regularization counterpart of~\cite{gratton2020note}.
In~\cite{gratton2021algorithm}, a composite problem incorporating a non-smooth term in the objective function is considered with deterministic error bounds on objective and gradient evaluations and shows that the complexity becomes $O(|\log(\epsilon)|\epsilon^{-2})$ for this class of problems.

In the context of multiple precision computations, mentioned in the first paragraph, considering only objective function and gradient (and possibly further derivative orders) is not enough to ensure convergence.
Indeed, running an algorithm with finite precision computations, and in particular with (very) low-precision computations, implies that any computed value is inexact.
The purpose of this paper is to propose a comprehensive convergence analysis of a multi-precision algorithm based on first order regularization methods, MPR2, by taking into account all the possible sources of error due to finite precision computations.
To the best of the author's knowledge, such a study has not been proposed for non-linear methods.
More generally, the aim of this paper is to point out some pitfalls of using low-precision computations.

This paper is organized as follows. 
Section~\ref{sec:pb_statement} introduces the problem we are aiming to solve, how the inexact evaluations of the objective function and gradient are modelled and the R2 algorithm in this context of inexact evaluations.
Section~\ref{sec:finite precision} introduces Floating Points (FP) computations. It lists the sources of errors induced by finite precision computations occurring in R2 and provides bounds on these errors.
Section~\ref{sec:MPR2} presents MPR2, the multi-precision variant of R2 that takes all the errors listed in Section~\ref{sec:finite precision} into account.
Section~\ref{sec:num_experiments} shows the numerical results obtain with MPR2.
Section~\ref{sec:conclusion} recaps the results presented in this paper and devises on future research directions.
Finally, the proof of convergence of MPR2 is given in Appendix~\ref{sec:proof}.

\section{Problem Statement and Background}
\label{sec:pb_statement}

We consider the problem
\begin{equation}%
	\label{eq:nlp}
	\minu{x\in \R^n} \, f(x),
\end{equation}
with $f: \R^n \to \R$ continuously differentiable.
We assume that evaluations and computations are performed in finite precision, and that a set of FP systems is available.
For example, modern hardware may offer access to half, single and double precision units.
As a consequence, it is not possible to evaluate $f$ nor $\nabla f$ exactly, but only finite precision approximations, denoted $\ffp$ and $\gfp$.
For more generality, we assume that models $\ef$ and $\eg$ of the evaluation errors are provided as functions of $x$:
\begin{equation}
	\label{eq:fgerr}
	|f(x)-\ffp(x)| \leq \ef(x), \quad \|\nabla f(x)-\gfp(x)\|\leq \eg(x)\|\gfp(x)\|,
\end{equation}
for all \(x \in \R^n\).
Note that $\ef$ and $\eg$ can account for any kind of evaluation errors such as, for example, gradient approximation in a derivative-free context, and are not limited to finite precision computation errors. 

We make the following assumption.
\begin{assumption}%
	\label{as:lips} 
	There exists \(L > 0\) such that \(\|\nabla f(x) - \nabla f(y)\| \leq L\|x-y\|\) for all \(x\), \(y \in \R^n\).
\end{assumption} 

The quadratic regularization method \cite{aravkin2022proximal}, called here R2, 
is based on the first-order model
\begin{equation*}
	T(x,s) = f(x) + \nabla f(x)^Ts.
\end{equation*}
Under Assumption~\ref{as:lips}~\cite{birgin2017worst},
\begin{equation}
	\label{eq:ft_diff}
	|f(x+s) - T(x,s)| \leq \tfrac{1}{2}L\|s\|^2.
\end{equation}

Since $f$ and $\nabla f(x)$ cannot be computed exactly, neither can $T$.
We must rely instead on the approximate Taylor series 
\begin{equation*}
	\approxT(x,s) = \widehat{f}(x) + \gfp(x)^T s.
\end{equation*} 
The regularized approximate Taylor series,
\begin{equation*}
	m(x,s) = \approxT(x,s) + \tfrac{1}{2}\sigma\|s\|^2,
\end{equation*}
is used as local model in the algorithm, where $\sigma > 0$ is the regularization parameter that controls the step size.
At each iteration, the step is chosen as the minimizer of the local model:
\begin{equation*}
	s \in \underset{s\in\R^n}{\argminu{}} \,  m(x,s) = \left\{-\frac{1}{\sigma} \widehat{g}(x)\right\}.
\end{equation*}

Algorithm~\ref{alg:qr} is an adaptation of the standard R2 algorithm that takes objective and gradient errors into account.
It is proved to converge to a first order critical point in~\cite{bellavia2018deterministic}, under the assumption that all other computations in the algorithm are performed in exact arithmetic.
In the algorithm we use $\eg(x_k)\leq \kappa_\mu$ to control the error on gradient evaluation instead of $\eg(x_k)\leq 1/\sigma_k$, which is employed in~\cite{bellavia2018deterministic}.
Doing so avoids demanding excessive accuracy on the gradient when $\sigma_k$ is large.

The stopping condition in Step~\(1\) of Algorithm~\ref{alg:qr} ensures that $\|\nabla f(x_k)\| \leq \epsilon$ while taking the error on the gradient evaluation into account.
In Step~\(2\), if the convergence condition $\eg(x_k)\leq \wgaugbound$ is not satisfied, the evaluation precision on the gradient is increased and the gradient is recomputed with a smaller $\eg(x_k)$ in Step~\(1\).
In Step~\(3\), a suitable evaluation precision for the objective function must be chosen such that $\ef(x_k)$ and $\ef(c_k)$ are lower than $\eta_0\overline{\Delta T}_k$ to ensure convergence.

\begin{algorithm}[ht]
	\caption{Quadratic regularization algorithm with inexact evaluations}%
	\label{alg:qr}
	\begin{algorithmic}
		\State \textbf{Step 0: Initialization:} Choose an initial point $x_0 \in \R^n$, initial $\sigma_0 > 0$, minimal value $\sigma_{\min} > 0$ for $\sigma$, final gradient tolerance $\epsilon > 0$,   
		and constants
		\begin{equation*}
			0<\eta_1\leq \eta_2<1,\quad 0<\gamma_1<1<  \gamma_2 \leq \gamma_3, \quad \eta_0<\tfrac{1}{2}\eta_1, \quad \eta_0+\tfrac{1}{2} \wgaugbound \leq \tfrac{1}{2}(1-\eta_2).
		\end{equation*}
		Set $k=0$, compute $\widehat{f}_0 = \ffp(x_0)$. Select a precision level for gradient evaluation.
		\State \textbf{Step 1: Check for termination:} If $k = 0$ or $\rho_{k-1} \geq \eta_1$ (previous iteration successful), compute $g_k = \gfp(x_k)$.
		Terminate if \begin{equation*}
			\|g_k\| \leq \dfrac{\epsilon}{1+\eg(x_k)}.
		\end{equation*}
		\State \textbf{Step 2: Step calculation: } Compute $s_k = -g_k/\sigma_k$.\\
		If $ \eg(x_k) > \wgaugbound$, increase gradient evaluation precision and go to Step 1.\\
		Compute $c_k = x_k+s_k$.\\
		Compute predicted reduction $\overline{\Delta T}_k := \approxT(x_k, s_k) - \ffp(x_k) = g_k^Ts_k$.
		
		\State \textbf{Step 3: Evaluate the objective function: }
		Choose the objective function evaluation precision such that
		$\ef(x_k)\leq  \eta_0 \tsdifffp_k$, and $\ef(c_k)\leq  \eta_0 \tsdifffp_k$.\\
		Compute $f_k^+ = \ffp(c_k)$ and recompute $f_k = \ffp(x_k)$ if evaluation precision different that $\ffp(c_{k-1})$.
		\State \textbf{Step 4: Acceptance of the trial point: } Define the ratio
		\begin{equation*}
			\rho_k = \dfrac{f_k - f_{k}^+}{\overline{\Delta T}_k}
		\end{equation*}
		If $\rho_k \geq \eta_1$, set $x_{k+1} =c_{k}$, $f_{k+1} = f_{k}^+$.
		Otherwise, set $x_{k+1} = \xkfp{k}$, $f_{k+1} = f_{k}$.
		
		\State \textbf{Step 5: Regularization  parameter update: }
		\begin{equation*}
			\sigma_{k+1} \in
			\begin{cases}
				[\max(\sigma_{\min},\gamma_1\sigma_k),\sigma_k] & \text{if } \rho_k \geq \eta_2\\
				
				[\sigma_k,\gamma_2\sigma_k] & \text{if }\rho_k \in [\eta_1,\eta_2) \\
				
				[\gamma_2\sigma_k,\gamma_3\sigma_k] & \text{if } \rho_k < \eta_1.
			\end{cases}
		\end{equation*}
		Increment $k \leftarrow k+1$, and go to Step~\(1\).
	\end{algorithmic}
\end{algorithm}

\section{Finite Precision Preliminaries}%
\label{sec:finite precision}

Finite precision computations induce not only inexact evaluations of the objective function and the gradient, but also other errors that must be taken into account.
In this section, the sources of errors due to finite precision computations are analyzed.
Mechanisms are added to Algorithm~\ref{alg:qr} to ensure the convergence in spite of these errors.
Algorithm~\ref{alg:qrfp}, introduced later in Section~\ref{sec:MPR2}, is the adapted version of  Algorithm~\ref{alg:qr} robust to finite precision errors.

\subsection{IEEE 754 Norm and Dot Product Error}

Let \(\F\) denote a FP system in base \(2\) with mantissa of length \(p\).
The \emph{machine epsilon} is 
\(
\epsilon_M := 2^{1-p}
\).
If the rounding strategy is round-to-nearest, the \emph{unit round-off} is 
\(
u := 2^{-p} = \tfrac{1}{2} \epsilon_M
\).
For \(x \in \R\), \(\fl(x) \in \F\) is the representation of \(x\) in \(\F\).
Provided no overflow occurs when representing \(x\), \(\fl(x) = x (1 + \delta)\) where $|\delta| \leq u$.
The IEEE standard~\cite{ieee754} requires that arithmetic operations produce a result such that 
\begin{equation}
	\label{eq:rounding_error}
	\fl(x\text{ op } y) = (x \text{ op } y)(1+\delta),
	\quad |\delta|\leq u,
	\quad \text{op}\in\{+,-,\times,/\},
	\ x, \, y \in \F.
\end{equation}

We make the following assumption.
\begin{assumption}
	Finite precision computations comply with the IEEE 754 norm, underflow and overflow do not occur during algorithm execution and the rounding mode is round to nearest (RN).
\end{assumption}

Successions of \(n\) arithmetic operations incur multiple errors of the form \(1 + \delta_i\) each, with $|\delta_i| \leq u$.
The notation used to study rounding error propagation is that of~\cite{higham2002accuracy}:
\begin{equation}
	\label{eq:err_propag}
	\prod_{i}^{n} (1+\delta_i)^{\pm 1} = (1+\theta_n),\quad |\theta_n|\leq \gamma_n,
\end{equation}
where several expressions for $\gamma_n$, summarized in Table~\ref{tab:gamma_formula}, were given over time.
Bounds such as~\eqref{eq:err_propag} can be overly pessimistic because they account for the worst-case propagation of errors.
\begin{table}[ht]
	\begin{center}
		\caption{Formulas for $\gamma_n$ from the literature.}\label{tab:gamma_formula}%
		\begin{tabular}{cccc}
			\toprule
			$\gamma_n$ & $\dfrac{nu}{1-nu}$ & $\dfrac{nu}{1-nu/2}$ & $nu$ \\
			\midrule
			ref & \cite{higham2002accuracy} & \cite{castaldo2007error} & \cite{jeannerod2013improved} \\
			\bottomrule
		\end{tabular}
	\end{center}
\end{table}

One particular use of $\gamma_n$ is to bound the error on the computation of the dot product of \(x\) and \(y \in \R^n\)~\cite{higham2002accuracy}:
\begin{equation*}
	|\fl(x^T y) - x^T y|\leq \gamma_n |x|^T |y|,
\end{equation*}
where $|x|$ is the vector whose components are the absolute values of those of \(x\).

We make the following further assumption.
\begin{assumption}%
	\label{ass:gamma_n_bound}
	The unit roundoff of the lowest precision FP format $\mepsmax$ is such that $\gamma_{n+2} = (n+2)\mepsmax < 1$.
\end{assumption}
Assumption~\ref{ass:gamma_n_bound} is necessary to ensure the convergence of MPR2, and constrains the size of the problems that we consider in a given a FP format.
For example, $u = \mathcal{O}(10^{-8})$ for single precision, which limits the size of the problem to $n = \mathcal{O}(10^8)$ with the least pessimistic bound $\gamma_n = nu$ of Table~\ref{tab:gamma_formula}.

\subsection{Notation and Conventions}%
\label{subsec:notation}

Throughout, we use the following notation.
\begin{enumerate}
	\item We denote $\Pi = \{1, \ldots, \pi_{\max}\}$ the set of indices of available FP formats.
	We assume that $\Pi$ is ordered by decreasing unit round-off of the corresponding FP formats.
	For example, $\Pi = \{1,2,3\}$ corresponds to $\{\text{Float16},\text{Float32},\text{Float64}\}$.
	\item \label{not:delta} $\delta$ denotes a rounding error. An index is added if there is a need to keep track of a particular rounding error or to clarify the calculations.
	However, because we mostly focus on error bound $|\delta|<u$, there is often no need to keep track of particular rounding error and we abusively drop the index.
	For example, in the analysis
	$\fl(c/\fl(a+b)) = \fl(\dfrac{c}{(a+b)(1+\delta)}) = \dfrac{c}{a+b}\, \dfrac{1+\delta}{1+\delta}$, $\delta$ is employed to denotes two different rounding error bounded as $|\delta|\leq u$.
	Therefore, $\dfrac{c}{a+b}\, \dfrac{1+\delta}{1+\delta} \neq c/(a+b)$ but we write, following \eqref{eq:err_propag}, $\dfrac{c}{a+b}\, \dfrac{1+\delta}{1+\delta} = (a+b)/c\, (1+\theta_2)$ with $|\theta_2|<2u$.
	There is no need here to know which $\delta$ corresponds to $+$ or $/$ operators to perform the rounding error analysis.
	\item FP numbers are denoted with a hat, $\widehat{x}\in \F$.
	\item The notation tilde is used to denote a quantity that includes a rounding error.
	For example $\fl(\widehat{x}+\widehat{y}) = (\widehat{x}+\widehat{y})(1+\delta) = \widetilde{x}+\widetilde{y}$ with $\widetilde{x} = \widehat{x}(1+\delta)$ and $\widetilde{y} = \widehat{y}(1+\delta)$.
	Note that in general, $\widetilde{x}\notin\F$.
	\item $\theta_n$ models any rounding error propagation as in~\eqref{eq:err_propag}.
	\item $\vartheta_n$ denotes a real number bounded as $|\vartheta_n|\leq \gamma_n$.
	It is important to note that $\vartheta_n$ is not the result of accumulated rounding errors and is not expressed as $\theta_n$.
	As a consequence, $(1+\theta_n)(1+\delta) = (1+\theta_{n+1})$ but $(1+\vartheta_{n})(1+\delta) \neq (1+\vartheta_{n+1})$.
	\item $\fpreck{k}$ denotes the index of the FP format used to evaluate $\ffp$ at $\ckfp{k}$, and $\fprecrecompk{k}$ the one used to evaluate $\ffp$ at $\xkfp{k}$.
	Similarly, $\gpreck{k}$ denotes the index of the FP format used to evaluate $\gfp$.
	\item $\xpreck{k}$ denotes the index of the FP format of the current incumbent $\xkfp{k}$, and $\cpreck{k}$ the index of the FP format of the candidate $\ckfp{k}$.
	\item We use the notation \(\fl(x)\) when the FP system in which we wish to represent \(x\) is obvious from the context.
	Wherever it may be ambiguous, we write \(\fl(x, \pi)\), where \(\pi \in \Pi\).
	\item $\mepsk{c}{k}$ denotes the unit round-off related to the FP format corresponding to $\cpreck{k}$, and $\mepsk{g}{k}$ the unit round-off related to the FP format corresponding to $\gpreck{k}$.
	\item $\mepsmax$ (respectively $\mepsmin$) denotes the largest (resp.\ the smallest) unit roundoff among the available FP formats, i.e., that associated with the lowest (resp.\ the highest) precision.
	
	\item $\gamma_{n}$, $\alpha_n$, and $\beta_{n}$ (introduced later) are quantities that depend on the unit roundoff and accounts for rounding error of dot product and norm computations with finite precision.
	Since several FP formats are dealt with, the unit roundoff might be added as an argument to avoid confusion.
	For example, we denote $\gamma_n(\mepsk{x}{k})$ the quantity $\gamma_{n}$ defined with the unit roundoff corresponding to the FP format of $\xkfp{k}$.
	\item \label{not:abusive_vect}Given a vector $\widehat{x}\in \F^n$, we denote abusively,
	\begin{equation*}
		\begin{array}{ll}
			\xkfp{}(1+\delta_x) = (\xkfp{1}(1+\delta_x),\dots,\xkfp{n}(1+\delta_x))^T,\quad
			\xkfp{}\delta_x = (\xkfp{1}\delta_x,\dots,\xkfp{n}\delta_x)^T,
		\end{array}   
	\end{equation*}
	and vice-versa, where the same $\delta_x$ is ``broadcast" to every components of $\xkfp{}$.
	This abusive notation is also extended to $\delta$, $\theta_n$ and $\vartheta_{n}$.
\end{enumerate}

\textbf{Multi-format operations:} When performing computations with FP numbers in different formats, we assume that the operands are implicitly cast into the highest precision format among theirs and then the operation is performed.
The resulting FP number is therefore represented in this last format and the rounding errors are bounded by its unit round-off.
For example, if $\xkfp{k}$ is a vector of Float32 elements and $\skfp{k}$ a vector of Float64 elements, the computation of $\ckfp{k} := \fl(\xkfp{k}+\skfp{k})$ first consists in casting each element of $\xkfp{k}$ to Float64 and then performing the addition, which results in $(\xkfp{k}+\skfp{k})(1+\delta)$, i.e., a vector of Float64 elements where $\delta$ is bounded by the unit round-off of Float64.  
The implicit casting into another FP format of the arguments is done only in the context of the operation, $\xkfp{k}$ still has Float32 format after computing $\ckfp{k}$ in the above example.

\textbf{Representation of $\sigkfp{k}$:} We assume that $\sigkfp{k} := \fl(\sigma_k)$ is representable exactly in any of the available FP formats.
This can be achieved by setting $\sigma_0$ to a power of \(2\) and making sure that $\sigkfp{k}$ is multiplied by a power of 2 (positive of negative) at Step~\(5\) of Algorithm~\ref{alg:qrfp}.
In this way, we can safely ignore the FP format of $\sigkfp{k}$, and we assume that the FP format of $\skfp{k}$ is that of $\gkfp{k}$ when computed at Step~\(2\), as indicated in Table~\ref{tab:op_prec}.

\textbf{Forbidden evaluations:} Evaluating $\ffp$ at $\xkfp{k}$ is forbidden if $\xpreck{k}>\fprecrecompk{k}$, as that would imply casting $\xkfp{k}$ to a lower precision format.
A rounding error would hence occur, and $\ffp$ would be evaluated at $\xkfp{k}(1+\delta)$ instead of $\xkfp{k}$.
In order to keep the proof of convergence theoretically sound, such evaluation is forbidden.
If $\xpreck{k}<\fprecrecompk{k}$, casting induces no rounding error.
As for multi-format operations, casting is implicit and the FP format of the argument of $\ffp$ remains the same before and after the evaluation of $\ffp$.
The index of the FP format of the returned value by the evaluation of $\ffp$ is $\fprecrecompk{k}$.
All the above applies to $\gfp$ similarly.
Since the format index of $\skfp{k}$ is $\gpreck{k}$, it follows that $\gpreck{k}\geq \xpreck{k}$ for all $k$, and the elements of $\xkfp{k} + \skfp{k}$ are in a FP format with index $\gpreck{k}$, as indicated in Table~\ref{tab:op_prec}.

\textbf{Defined values:} We assume that some values in Algorithm~\ref{alg:qrfp} are \emph{defined}, that is, ``computed'' in exact arithmetic.
These values are $\rho_k$, $\phi_k$, $\gamma_n$, $\beta_n$, $\mu_k$, and $\umixk{k}$, which is why those quantities are denoted without a hat.
Computing those values involve few operations, and therefore few rounding errors.
In practice, they can be computed in high precision cheaply, which may be assimilated to exact arithmetic for the present purposes.

Table~\ref{tab:op_prec} summarizes what are the indices of the FP formats of the values returned by operations and evaluations of $\ffp$ and $\gfp$.
Table~\ref{tab:fg_eval_prec} is limited to three FP formats but can be extended to any number of formats.
Table~\ref{tab:precs} summarizes the indices of the FP formats of the computed values.

\begin{table}[ht]
	\begin{minipage}[t]{0.49\linewidth}%
		\caption{Indices of FP format of values returned by operations.
			$-$ denotes an operation not occurring in Algorithm~\ref{alg:qrfp}.
			\label{tab:op_prec}
		}
		\centering
		\begin{tabular}{ccc}
			$+,\, / $ & $\xkfp{k}$ & $\gkfp{k}$
			\\ \toprule
			$\skfp{k}$ & $\gpreck{k}$ & ---
			\\ \midrule
			$\sigkfp{k}$ & --- & $\gpreck{k}$
			\\ \bottomrule
		\end{tabular}
	\end{minipage}
	\hfill
	\begin{minipage}[t]{0.49\linewidth}%
		\caption{Indices of FP format of values returned by evaluation of $\ffp$ and $\gfp$.
			A blank denotes a forbidden evaluation.
			\label{tab:fg_eval_prec}
		}
		\centering
		\begin{tabular}{ccccc}
			& \multicolumn{4}{c}{$\fpreck{k}$/$\gpreck{k}$}
			\\ \toprule
			& \multicolumn{1}{c}{}& 1 & 2 & 3 \\ \cline{3-5}
			$\xpreck{k}$ & 1 & 1 & 2 & 3 \\
			/ & 2 & & 2 & 3 \\ 
			$\cpreck{k}$ & 3 & & & 3
		\end{tabular}
	\end{minipage}
\end{table}

\begin{table}[ht]
	\caption{Indices of FP format of the computed values in Algorithm~\ref{alg:qrfp}.}%
	\label{tab:precs}
	\begin{tabular}{ccccccccccc}
		$\xkfp{k}$ & $\skfp{k}$ & $\ckfp{k}$ & $\fkfp{k}$ & $\fkcfp{k}$ & $\gkfp{k}$ & $\tsdifffp_k$ & $\fl(\|\xkfp{k}\|)$ & $\fl(\|\skfp{k}\|)$ & $\widehat{\|\gkfp{k}\|}$ & $\widehat{\phi}_k$
		\\ \toprule
		$\xpreck{k}$ & $\gpreck{k}$ & $\cpreck{k}$ & $\fpreck{k-1}/\fprecrecompk{k}$ & $\fpreck{k}$ & $\gpreck{k}$ & $\gpreck{k}$ & $\xpreck{k}$ & $\gpreck{k}$ & $\gpreck{k}$ & $\gpreck{k}$
	\end{tabular}
\end{table}

\subsection{Inexact Step and Backward-Error Taylor Series}%
\label{subsec:bet-decrease}

When computing the step $\skfp{k}$ at iteration $k$ in finite precision arithmetic (Step~\(1\) of Algorithm~\ref{alg:qr}), rounding errors occur and $\skfp{k}$ is not in the direction $\gkfp{k} = \gfp(\xkfp{k})$ but instead in a direction $\gd_k$ that includes those rounding errors.
Indeed,
\begin{equation}%
	\label{eq:sk_exp}
	\skfp{k} =
	\fl(-\frac{1}{\sigkfp{k}} \gkfp{k}) =
	-\frac{1}{\sigkfp{k}} \gd_k,
\end{equation}
with
\begin{equation}%
	\label{eq:gk_tilde}
	\gd_{k} = \gkfp{k}\odot\left((1+\delta_1),\dots,(1+\delta_n)\right),
\end{equation}
where $\odot$ is the element-wise multiplication operator.

Note that following the remarks of Section~\ref{subsec:notation}, the FP format index of $\gkfp{k}$ is $\gpreck{k}$ since $\gkfp{k}$ results from the evaluation of $\gfp$, and $\gpreck{k}$ is also the FP format of $\skfp{k}$ since we assume that $\sigkfp{k}$ is representable in any FP format (see Table~\ref{tab:precs}).

\begin{lemma}%
	\label{lem:g_actual_diff}
	For all \(k\), \(\|\gkfp{k}-\gd_k\| \leq \mepsk{g}{k}\|\gkfp{k}\|\) and $\|\gd_k\| \geq \|\gkfp{k}\|(1-\mepsk{g}{k})$.
\end{lemma}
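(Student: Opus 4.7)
The plan is to unpack the expression for $\gd_k$ given in~\eqref{eq:gk_tilde} componentwise and then bound the Euclidean norm in the usual way.

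First, note that each $\delta_i$ in~\eqref{eq:gk_tilde} is a rounding error incurred by the scalar-by-vector product $-\gkfp{k}/\sigkfp{k}$, hence satisfies $|\delta_i| \leq \mepsk{g}{k}$ since the result is stored in the FP format of index $\gpreck{k}$ (see Table~\ref{tab:precs}). Componentwise subtraction yields
\begin{equation*}
	(\gkfp{k} - \gd_k)_i = -\gkfp{k,i}\,\delta_i,
\end{equation*}
so that
\begin{equation*}
	\|\gkfp{k}-\gd_k\|^2 = \sum_{i=1}^n \gkfp{k,i}^2\,\delta_i^2
	\leq (\mepsk{g}{k})^2 \sum_{i=1}^n \gkfp{k,i}^2
	= (\mepsk{g}{k})^2 \|\gkfp{k}\|^2.
\end{equation*}
Taking the square root establishes the first inequality.

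The second inequality follows immediately from the reverse triangle inequality:
\begin{equation*}
	\|\gd_k\| \geq \|\gkfp{k}\| - \|\gkfp{k}-\gd_k\| \geq \|\gkfp{k}\| - \mepsk{g}{k}\|\gkfp{k}\| = \|\gkfp{k}\|(1-\mepsk{g}{k}).
\end{equation*}

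There is no real obstacle here; the only subtlety is identifying the correct unit roundoff to use, namely $\mepsk{g}{k}$, which is justified by the multi-format conventions of Section~\ref{subsec:notation} together with the fact that $\sigkfp{k}$ is assumed exactly representable in every available FP format, so the only rounding errors in the division by $\sigkfp{k}$ are those absorbed into the $\delta_i$'s.
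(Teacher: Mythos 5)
Your proof is correct and follows essentially the same componentwise argument as the paper for the first inequality, including the correct identification of $\mepsk{g}{k}$ as the relevant unit roundoff. The only (immaterial) difference is that you obtain the lower bound on $\|\gd_k\|$ via the reverse triangle inequality applied to the first part, whereas the paper bounds $\|\gd_k\|^2$ directly componentwise using $(1+\delta_i)^2 \geq (1-\mepsk{g}{k})^2$; both yield the same conclusion.
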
 

\begin{proof}
	Denote $\gkfp{k} = (\gkfp{k}^1,\dots,\gkfp{k}^n)$ and $\gd_k = (\gd_k^1,\dots,\gd_k^n)$.
	We have from~\eqref{eq:gk_tilde}
	\[
	\|\gkfp{k}-\gd_k\|^2 = 
	\sum_{i=1}^n (\gkfp{k}^i-\gkfp{k}^i(1+\delta_i))^2 \leq
	\sum_{i=1}^n (\mepsk{g}{k})^2 (\gkfp{k}^i)^2 =
	(\mepsk{g}{k})^2\|\gkfp{k}\|^2,
	\]
	which proves the first inequality.
	Similarly, because \(-u\leq \delta \leq u\) \(\implies\) \((1-u)^2\leq (1+\delta)^2 \leq (1+u)^2\),~\eqref{eq:gk_tilde} implies
	\begin{equation*}
		\|\gd_k\|^2  =
		\sum_{i=1}^n (\gkfp{k}^i(1+\delta_i))^2 \geq
		(1-\mepsk{g}{k})^2\|\gkfp{k}\|^2. \tag*{\qed}
	\end{equation*} 
	\renewcommand{\qed}{}  
\end{proof}

We define the \emph{Backward Error Taylor series} (BET) as
\begin{equation*}
	\pseudoT(\xkfp{k},s) = \ffp(\xkfp{k}) + \gd^Ts.
\end{equation*}
The terminology ``backward error''~\cite{higham2002accuracy} refers to the fact that $\gd$ includes the backward error due to the finite precision computation of $\skfp{k}$.
The BET series, being defined with $\gd^T$ the direction of the computed step $\skfp{k}$, has to be considered in place of $\approxT$ to prove the convergence of MPR2.

Because $\gd$ is unknown, so is $\pseudoT$.
However, as Lemma~\ref{lem:pseudo_reduction} states, computing the decrease $\tsdifffp_k$ in the approximate Taylor series $\approxT$ with finite precision arithmetic provides the decrease $\widetilde{\Delta T}_k$ in the unknown BET series with a relative error.

\begin{lemma}%
	\label{lem:pseudo_reduction}
	Let $\tsdifffp_k := \fl\left( \approxT(\xkfp{k},0) - \approxT(\xkfp{k},\skfp{k}) \right)$ and \(\widetilde{\Delta T}_k := \pseudoT(\xkfp{k}, 0) - \pseudoT(\xkfp{k}, \skfp{k})\).
	Then,
	\begin{equation*}
		\tsdifffp_k =
		\widetilde{\Delta T}_k (1+\vartheta_{n+1}) =
		-\gd_k^T \skfp{k} (1+\vartheta_{n+1}) =
		\dfrac{\|\gd_k\|^2}{\sigkfp{k}}(1+\vartheta_{n+1})
	\end{equation*}
	with $|\vartheta_{n+1}| \leq \gamma_{n+1}$.
	Furthermore, $\tsdifffp_k\geq 0$.
\end{lemma}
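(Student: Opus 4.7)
The plan is to prove the three claimed equalities by combining direct algebraic expansion with a standard dot-product rounding analysis. First, I would observe that $\approxT(\xkfp{k},0) - \approxT(\xkfp{k},\skfp{k}) = -\gkfp{k}^T\skfp{k}$ exactly, so $\tsdifffp_k = \fl(-\gkfp{k}^T\skfp{k})$. By the definition of $\pseudoT$, $\widetilde{\Delta T}_k = -\gd_k^T\skfp{k}$, and substituting $\skfp{k} = -\gd_k/\sigkfp{k}$ from~\eqref{eq:sk_exp} gives $-\gd_k^T\skfp{k} = \|\gd_k\|^2/\sigkfp{k}$. These two identities are purely algebraic and handle the second and third equalities in the statement without any rounding argument; only the first equality requires finite precision analysis.

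For that first equality, I would apply the standard FP dot-product bound from~\cite{higham2002accuracy} to write $\fl(\gkfp{k}^T\skfp{k}) = \sum_i \gkfp{k}^i\,\skfp{k}^i\,(1+\theta_n^{(i)})$ with each $|\theta_n^{(i)}| \leq \gamma_n$, where $\gamma_n$ is evaluated at $\mepsk{g}{k}$ since, by Table~\ref{tab:op_prec}, the product $\gkfp{k}^T\skfp{k}$ is computed in format $\gpreck{k}$. To bring $\gd_k$ into the picture I invoke~\eqref{eq:gk_tilde}, so $\gkfp{k}^i = \gd_k^i/(1+\delta_i)$; by the propagation rule~\eqref{eq:err_propag}, $(1+\theta_n^{(i)})/(1+\delta_i) = 1+\theta_{n+1}^{(i)}$ with $|\theta_{n+1}^{(i)}| \leq \gamma_{n+1}$. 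Hence $\tsdifffp_k = -\sum_i \gd_k^i\,\skfp{k}^i\,(1+\theta_{n+1}^{(i)})$.

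The last step is a sign argument. Each contribution $a_i := -\gd_k^i\,\skfp{k}^i = (\gkfp{k}^i)^2(1+\delta_i)^2/\sigkfp{k}$ is nonnegative because $\sigkfp{k} > 0$, and this uniform sign property is exactly what lets the per-component errors collapse into a single multiplicative factor: $|\sum_i a_i\,\theta_{n+1}^{(i)}| \leq \gamma_{n+1}\sum_i a_i$, so $\sum_i a_i(1+\theta_{n+1}^{(i)}) = \bigl(\sum_i a_i\bigr)(1+\vartheta_{n+1})$ for some $\vartheta_{n+1}$ with $|\vartheta_{n+1}| \leq \gamma_{n+1}$ (falling back to $\vartheta_{n+1}=0$ in the degenerate case $\gd_k=0$). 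Since $\sum_i a_i = \|\gd_k\|^2/\sigkfp{k}$, the chain of equalities closes. Nonnegativity of $\tsdifffp_k$ then follows from $\|\gd_k\|^2/\sigkfp{k} \geq 0$ together with Assumption~\ref{ass:gamma_n_bound}, which forces $\gamma_{n+1} < 1$ and hence $1+\vartheta_{n+1} > 0$.

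I expect the sign argument to be the only subtle step: without the observation that every $a_i$ has the same sign, one can only derive the additive bound $|\tsdifffp_k - \widetilde{\Delta T}_k| \leq \gamma_{n+1}\sum_i|\gd_k^i|\,|\skfp{k}^i|$, which is strictly weaker than the multiplicative $(1+\vartheta_{n+1})$ relation claimed and would not yield nonnegativity of $\tsdifffp_k$. The remainder is routine bookkeeping inside the $\theta_n/\vartheta_n$ notation of Section~\ref{subsec:notation}.
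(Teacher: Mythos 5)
Your proposal is correct and follows essentially the same route as the paper: expand the floating-point dot product via the Higham model, substitute $\gkfp{k}^i = \gd_k^i/(1+\delta_i)$ to absorb the step-computation error into a $(1+\theta_{n+1})$ factor per component, and use the fact that every summand $(\gd_k^i)^2/\sigkfp{k}$ is nonnegative to sandwich the computed value between $(1\pm\gamma_{n+1})$ times $\|\gd_k\|^2/\sigkfp{k}$, with nonnegativity then following from Assumption~\ref{ass:gamma_n_bound}. The uniform-sign observation you flag as the key step is exactly what the paper's upper/lower sandwich bounds rely on implicitly.
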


\begin{proof}
	Using~\eqref{eq:sk_exp},~\eqref{eq:gk_tilde} and the fact that \(\sigkfp{k}\) is representable exactly, one has \cite{higham2002accuracy},
	\begin{equation*}
		\begin{aligned}
			\tsdifffp_k = \fl(-\gkfp{k}^T \skfp{k}) & = - \fl\left( \left( \gd_k^1/(1+\delta_1),\dots,\gd_k^n/(1+\delta_n)\right)^T\,(-\gd_k/\sigkfp{k}) \right) \\
			& = \dfrac{(\gd_k^1)^2}{\sigkfp{k}}\dfrac{(1+\delta)^n}{(1+\delta_1)} + \dfrac{(\gd_k^2)^2}{\sigkfp{k}}\dfrac{(1+\delta)^n}{(1+\delta_2)} + \cdots + \dfrac{(\gd_k^n)^2}{\sigkfp{k}}\dfrac{(1+\delta)^2}{(1+\delta_n)}\\
			& = \dfrac{(\gd_k^1)^2}{\sigkfp{k}}(1+\theta_{n+1}) + \dfrac{(\gd_k^2)^2}{\sigkfp{k}}(1+\theta_{n+1}) + \cdots + \dfrac{(\gd_k^n)^2}{\sigkfp{k}}(1+\theta_3).
		\end{aligned}
	\end{equation*}
	According to~\eqref{eq:err_propag}, the absolute value of each perturbation term $\theta$ in the sum above is bounded by $\gamma_{n+1}$.
	Since $\sigkfp{k}>0$ and we assume $\gamma_{n+1}<1$, we have \(1 - \gamma_{n+1} \leq 1 + \theta_j \leq 1 + \gamma_{n+1}\) for \(j = 3, \ldots, n+1\).
	It follows that, on the one hand,
	\begin{equation*}
		\begin{aligned}
			\tsdifffp_k & \geq \dfrac{(\gd_k^1)^2}{\sigkfp{k}}(1-\gamma_{n+1}(\mepsk{g}{k})) + \dfrac{(\gd_k^2)^2}{\sigkfp{k}}(1-\gamma_{n+1}(\mepsk{g}{k})) + \cdots + \dfrac{(\gd_k^n)^2}{\sigkfp{k}}(1-\gamma_{n+1}(\mepsk{g}{k}))\\
			& \geq \frac{1}{\sigkfp{k}} \gd_k^T \gd_k (1-\gamma_{n+1}(\mepsk{g}{k}))\\
			& \geq \gd_k^T\skfp{k}(1-\gamma_{n+1}(\mepsk{g}{k})),
		\end{aligned}
	\end{equation*}
	and on the other hand,
	\begin{equation*}
		\tsdifffp_k \leq \gd_k^T\skfp{k}(1+\gamma_{n+1}(\mepsk{g}{k})).
	\end{equation*}
	Therefore, one has 
	\begin{equation*}
		\gd_k^T\skfp{k}(1-\gamma_{n+1}(\mepsk{g}{k})) \leq \tsdifffp_k \leq \gd_k^T\skfp{k}(1+\gamma_{n+1}(\mepsk{g}{k})),
	\end{equation*}
	which proves that there exists $\vartheta_{n+1}$ such that $|\vartheta_{n+1}| \leq \gamma_{n+1}(\mepsk{g}{k})$ satisfying
	\begin{equation*}
		\tsdifffp_k  = -\gd_k^T\skfp{k}(1+\vartheta_{n+1}).
	\end{equation*}
	Since $\skfp{k} = - \gd_k/\sigkfp{k}$, one has
	$-\gd_k^T\skfp{k} = \|\gd_k\|^2/\sigkfp{k}$ and,
	\begin{equation*}
		\tsdifffp_k  = -\gd_k^T\skfp{k}(1+\vartheta_{n+1}) = \dfrac{\|\gd_k\|^2}{\sigkfp{k}}(1+\vartheta_{n+1}),
	\end{equation*}
	which establishes the first statement of the lemma.
	
	Assumption~\ref{ass:gamma_n_bound} ensures that
	$1+\vartheta_{n+1}\geq 1-\gamma_{n+1}(\mepsk{g}{k}) \geq 0$.
	As a consequence, 
	\begin{equation}
		\label{eq:ts_pos}
		\tsdifffp_k  = \dfrac{\|\gd_k\|^2}{\sigkfp{k}}(1+\vartheta_{n+1}) \geq \dfrac{\|\gd_k\|^2}{\sigkfp{k}}(1-\gamma_{n+1}(\mepsk{g}{k})) \geq 0. \tag*{\qed}
	\end{equation}
	\renewcommand{\qed}{}
\end{proof} 

Although $\vartheta_{n+1}$ is bounded by $\gamma_{n+1}$, it is not, strictly speaking, the result of rounding error propagation since it is not expressed as in~\eqref{eq:err_propag}.
Rather, it represents the relative error between the decrease of the BET series $\widetilde{\Delta T}_k$ and the computed value of the truncated Taylor series decrease $\tsdifffp_k$.


In order to streamline notation, we denote $\alpha_n$ an upper bound on $1 / (1+\vartheta_{n+1})$ defined as,
\begin{equation}%
	\label{eq:alphan}
	\alpha_{n+1} := \frac{1}{1-\gamma_{n+1}}\geq \frac{1}{1+\vartheta_{n+1}}>0.
\end{equation}
Note that Assumption~\ref{ass:gamma_n_bound} ensures that $\alpha_n$ is well-defined and that $\dfrac{1}{1+\vartheta_{n+1}}>0$.

\subsection{Actual Candidate and Search Direction}%
\label{subsec:candidate}

With finite precision computations, the computed candidate is
\begin{equation}
	\label{eq:actual_candidate}
	\ckfp{k} =
	\fl(\xkfp{k}+\skfp{k}) =
	(\xkfp{k}+\skfp{k})(1+\deltak{+}{k}) = \xkfp{k}+(\deltak{+}{k}\xkfp{k}+(1+\deltak{+}{k})\skfp{k}),
\end{equation}
with $|\deltak{+}{k}|\leq \mepsk{g}{k}$ (see Section~\ref{subsec:notation} for details).
As a consequence, the computed step $\skfp{k}$ is not the actual step.
The actual step is $  \xkfp{k}\deltak{+}{k} + \skfp{k}(1+\deltak{+}{k})$, and therefore the actual search direction is not $-\gd_k$ but $\sigkfp{k}(\ckfp{k} - \xkfp{k}) = -\gd_k(1+\deltak{+}{k}) +\sigkfp{k} \xkfp{k}\deltak{+}{k}$.
Figure~\ref{fig:actual_candidate} illustrates how rounding errors due to step and candidate computation affect the computation of the candidate $\ckfp{k}$.

\begin{figure}[ht]%
	\centering
	\newcommand{\cross}[3]{\draw (#1-#3,#2-#3)--(#1+#3,#2+#3);
\draw (#1-#3,#2+#3)--(#1+#3,#2-#3);
}
\usetikzlibrary{decorations.pathreplacing}
\begin{tikzpicture}
	\def\csize{0.1}
	\def\xkx{0}
	\def\xky{0}
	\def\gkx{-1.6}
	\def\gky{-1.2}
	\def\gkpx{-1.2}
	\def\gkpy{-1.6}
	
	\def\skx{1.5}
	\def\sky{1.125}
	\def\skpx{1.125}
	\def\skpy{1.5}
	\def\ckpx{1}
	\def\ckpy{2.6}
	
	\cross{\xkx}{\xky}{\csize};
	\node at (\xkx,\xky) [right]{$\xkfp{k}$};
	\draw[dashed,->] (\xkx,\xky) -- (\gkx,\gky) node [pos=1.1] {$\gkfp{k}$};
	\draw[dashed] (\xkx,\xky) -- (-\gkx,-\gky);
	\draw[->] (\xkx,\xky) -- (\gkpx,\gkpy) node [pos=1.1]{$\gd_k$};
	\draw (\xkx,\xky) -- (-\gkpx,-\gkpy);
	
	\cross{\skx}{\sky}{\csize};
	\node at (\skx,\sky) [right] {$\xkfp{k}-\dfrac{\gkfp{k}}{\sigma_k} = c_k$};
	
	\cross{\skpx}{\skpy}{\csize};
	\node at (\skpx,\skpy) [left] {$\xkfp{k}+\skfp{k}$};
	
	\cross{\ckpx}{\ckpy}{\csize};
	\node (hck) at (\ckpx,\ckpy) [above] {$\ckfp{k}$};
	\draw[decorate,decoration={brace,amplitude=7pt,raise=3pt}] (\ckpx,\ckpy) -- (\skpx,\skpy) node [midway, xshift=40pt] {$\xkfp{k}\deltak{+}{k} + \skfp{k}\deltak{+}{k}$};
	
	\draw[dotted,->] (hck) -- (-0.7*\ckpx,-0.7*\ckpy)node [pos=1.1] {$\gkact{k}$};

\end{tikzpicture}
	\caption{Difference between the exact candidate $c_k$ and the computed candidate $\ckfp{k}$.%
		\label{fig:actual_candidate}
	}  
\end{figure}

In~\eqref{eq:actual_candidate}, following the remarks of Section~\ref{subsec:notation}, the index of the FP format of $\ckfp{k}$ is $\gpreck{k}$.
We implement a mechanism that casts $\ckfp{k}$ into a FP format of index $\cpreck{k}$, which can be chosen freely.
This enables to lower the precision of the FP format of $\ckfp{k}$, and therefore those for $\ffp$ and $\gfp$.
Indeed, $\ffp$ and $\gfp$ cannot be evaluated at $\xkfp{k}$ or $\ckfp{k}$ in a FP format index lower than $\xpreck{k}$ or $\cpreck{k}$, respectively (see forbidden evaluations in Section~\ref{subsec:notation}), and $\xkfp{k+1} = \ckfp{k}$ if iteration $k$ is successful.
This mechanism is necessary to allow the precision for the evaluation of $\ffp$ and $\gfp$ to decrease over the iterations.
The extra cast of $\ckfp{k}$ is not represented in Figure~\ref{fig:actual_candidate} for simplicity.

Casting $\ckfp{k}$ into $\cpreck{k} < \gpreck{k}$ induces the additional rounding error
\begin{equation}
	\label{eq:casted_c}
	\fl(\ckfp{k},\cpreck{k}) = \ckfp{k}(1+\deltak{c}{k}) = (\xkfp{k}+\skfp{k})(1+\deltak{+}{k})(1+\deltak{c}{k}) =  (\xkfp{k}+\skfp{k})(1+\dmixk{k}),
\end{equation}
with $|\deltak{c}{k}|\leq \mepsk{c}{k}$ and,
\begin{equation*}
	\dmixk{k} = \deltak{+}{k} + \deltak{c}{k} +\deltak{+}{k}\deltak{c}{k}.
\end{equation*}
In the following, we consider $\dmixk{k}$ as a single virtual rounding error that is bounded by,
\begin{equation}%
	\label{eq:ukprime_def}
	\umixk{k} = \mepsk{g}{k} + \mepsk{c}{k} + \mepsk{g}{k}\mepsk{c}{k}.
\end{equation}
Note that if $\cpreck{k}\geq \gpreck{k}$, no rounding error due to the casting of $\ckfp{k}$ occurs and one simply has $\dmixk{k} = \deltak{+}{k}$ and $\umixk{k} = \mepsk{g}{k}$.

From~\eqref{eq:casted_c}, we define the \emph{actual step} as,
\begin{equation}%
	\label{eq:sact_def}
	\skact{k} := \skfp{k}(1+\dmixk{k}) + \xkfp{k}\dmixk{k},
\end{equation}
which satisfies $\fl(\ckfp{k},\cpreck{k}) = \xkfp{k}+\skact{k}$.
The \emph{actual search direction} is
\begin{equation}
	\label{eq:gact_def}
	-\gkact{k} := \sigkfp{k}\skact{k}.
\end{equation}

Since we have no relationship between $\gkfp{k}$ and $\xkfp{k}$, we do not have a relationship between $\skact{k}$ and $\gkfp{k}$ as we did in the previous section with $\gkfp{k}$ and $\gd_k$.
In order to deal with the inexact candidate computation, we introduce $\phi_k$ as,
\begin{equation}
	\label{eq:phi_k_def}
	\dfrac{\|\xkfp{k}\|}{\|\skfp{k}\|} \leq \phi_k.
\end{equation}  
Lemma~\ref{lem:actual_step_norm} provides an upper bound on $\|\skact{k}\|$ in terms of $\|\skfp{k}\|$ by taking advantage of $\phi_k$, enabling us to get rid of $\xkfp{k}$ in the expression that will prove useful later for the convergence analysis of MPR2.

\begin{lemma}%
	\label{lem:actual_step_norm}
	For all $k>0$,
	\begin{equation*}
		\|\skact{k}\| \leq \|\skfp{k}\|(1+\lambda_k),
	\end{equation*}
	with 
	\begin{equation}
		\label{eq:lambda_def}
		\lambda_k := \umixk{k}(\phi_k+1),
	\end{equation}
	where \(\umixk{k}\) is defined in~\eqref{eq:ukprime_def}.
\end{lemma}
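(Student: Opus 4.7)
The plan is to start directly from the definition of the actual step in equation~\eqref{eq:sact_def}, namely $\skact{k} = \skfp{k}(1+\dmixk{k}) + \xkfp{k}\dmixk{k}$, and apply the triangle inequality to get
\begin{equation*}
\|\skact{k}\| \leq \|\skfp{k}\|\,(1+|\dmixk{k}|) + \|\xkfp{k}\|\,|\dmixk{k}|.
\end{equation*}
This isolates the two sources of error: the multiplicative perturbation on $\skfp{k}$, and the additive contribution proportional to $\xkfp{k}$ that arises because the candidate $\ckfp{k}$ is formed from the sum $\xkfp{k}+\skfp{k}$ before being cast.

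Next I would substitute the bound on the virtual rounding error $\dmixk{k}$. By~\eqref{eq:ukprime_def} we have $|\dmixk{k}| \leq \umixk{k}$, which yields
\begin{equation*}
\|\skact{k}\| \leq \|\skfp{k}\|\,(1+\umixk{k}) + \umixk{k}\,\|\xkfp{k}\|.
\end{equation*}

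Finally, I would invoke the definition of $\phi_k$ from~\eqref{eq:phi_k_def}, which gives $\|\xkfp{k}\| \leq \phi_k\,\|\skfp{k}\|$, to replace the iterate norm by a multiple of the step norm:
\begin{equation*}
\|\skact{k}\| \leq \|\skfp{k}\|\,(1+\umixk{k}) + \umixk{k}\,\phi_k\,\|\skfp{k}\| = \|\skfp{k}\|\bigl(1 + \umixk{k}(1+\phi_k)\bigr).
\end{equation*}
Recognising the definition~\eqref{eq:lambda_def} of $\lambda_k = \umixk{k}(\phi_k+1)$ inside the parentheses gives exactly the claimed bound $\|\skact{k}\| \leq \|\skfp{k}\|(1+\lambda_k)$.

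There is no real obstacle here: the argument is a three-line chain consisting of the triangle inequality, the magnitude bound on $\dmixk{k}$, and the ratio bound defining $\phi_k$. The only thing to be mildly careful about is the abusive broadcast notation described in item~\ref{not:abusive_vect} of Section~\ref{subsec:notation}; in particular, $\dmixk{k}$ multiplying either $\skfp{k}$ or $\xkfp{k}$ is to be understood componentwise, but since $|\dmixk{k}|\leq \umixk{k}$ is a uniform scalar bound, the triangle inequality still yields the desired factor $(1+\umixk{k})$ and $\umixk{k}$ on the two terms without further subtlety.
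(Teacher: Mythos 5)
Your proposal is correct and follows essentially the same three-step chain as the paper's own proof: triangle inequality on the definition~\eqref{eq:sact_def} of $\skact{k}$, the bound $|\dmixk{k}|\leq \umixk{k}$, and the bound $\|\xkfp{k}\|\leq \phi_k\|\skfp{k}\|$ from~\eqref{eq:phi_k_def}. If anything, your version is slightly cleaner in writing $1+|\dmixk{k}|$ explicitly after the triangle inequality, where the paper keeps the signed quantity.
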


\begin{proof}
	We combine~\eqref{eq:sact_def}, the triangle inequality and~\eqref{eq:phi_k_def}, and obtain
	\begin{align}
		\|\skact{k}\| & = \|\skfp{k}(1+\dmixk{k}) + \xkfp{k}\dmixk{k}\| \nonumber \\
		&\leq \|\skfp{k}\|(1+\dmixk{k})+\|\xkfp{k}\|\dmixk{k} \nonumber \\
		& \leq \|\skfp{k}\|(1+\umixk{k})+\|\xkfp{k}\|\umixk{k} \nonumber \\
		& \leq \|\skfp{k}\|(1+\umixk{k})+\phi_k\|\skfp{k}\|\umixk{k} \nonumber \\
		& = \|\skfp{k}\|(1+\lambda_{k}). \tag*{\qed}
	\end{align}
	\renewcommand{\qed}{}
\end{proof}

If $\phi_k$ is large, $ \xkfp{k} \dmixk{k}$ is not negligible compared to $\skfp{k}$ and the difference between $-\gkfp{k}$ and the actual search direction can be large (see Figure~\ref{fig:actual_candidate}).
If $\phi_k$ is too large, convergence of the finite precision quadratic regularization algorithm cannot be ensured because there is no guarantee that the actual search direction $-\gkact{k}$ is a descent direction.

\subsection{Inexact Norm Computation}%
\label{subsec:norm}

In Algorithm~\ref{alg:qrfp}, it is necessary to compute $\|\gkfp{k}\|$,  $\|s_k\|$ and $\|\xkfp{k}\|$.
Due to rounding errors, norm computations are inexact.
Lemma~\ref{lem:norm_inexact} states an upper bound on the error on norm computation, assuming that the computation of the norm is done in the same FP format as the argument.

\begin{lemma}%
	\label{lem:norm_inexact}
	Let Assumption~\ref{ass:gamma_n_bound}, be satisfied.
	The error on the computation of the norm of $\xkfp{}\in \F^n$ in finite precision is bounded as
	\begin{equation*}
		|\fl(\|\xkfp{}\|) - \|\xkfp{}\|\,| \leq  \fl(\|\xkfp{}\|)\beta_{n+2},
	\end{equation*}
	where $\beta_{n} := \max\left(|\sqrt{1-\gamma_{n}}-1|,|\sqrt{1+\gamma_{n}}-1|\right)$, and $\beta_{n+2}\leq 1$.
\end{lemma}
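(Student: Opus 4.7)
My plan is to propagate rounding errors through the two-step finite-precision computation of the norm: first the inner product $\widehat{x}^T\widehat{x}$, then the square root.

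I would start from the standard dot-product error bound recalled earlier in Section~\ref{sec:finite precision}, applied with $x=y=\widehat{x}$, which gives
\[
\fl(\widehat{x}^T\widehat{x}) = \|\widehat{x}\|^2(1+\vartheta_n),\qquad |\vartheta_n|\leq \gamma_n.
\]
One additional rounding occurs when the IEEE square-root operation~\eqref{eq:rounding_error} is applied to the computed inner product, contributing a factor $(1+\delta)$ with $|\delta|\leq \meps$. Squaring yields
\[
\fl(\|\widehat{x}\|)^2 = \|\widehat{x}\|^2\,(1+\vartheta_n)(1+\delta)^2.
\]
Absorbing the extra rounding error into the $\gamma$-framework, using $(1+\delta)^2 = 1+\theta_2$ with $|\theta_2|\leq \gamma_2$ and the elementary inequality $\gamma_n+\gamma_2+\gamma_n\gamma_2 \leq \gamma_{n+2}$ (a direct check for any $\gamma$-formula in Table~\ref{tab:gamma_formula}), I get $\fl(\|\widehat{x}\|)^2 = \|\widehat{x}\|^2(1+\vartheta_{n+2})$ with $|\vartheta_{n+2}|\leq \gamma_{n+2}$.

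Next, Assumption~\ref{ass:gamma_n_bound} ensures $\gamma_{n+2}<1$, so $1+\vartheta_{n+2}>0$ and I can take positive square roots to obtain $\fl(\|\widehat{x}\|) = \|\widehat{x}\|\sqrt{1+\vartheta_{n+2}}$. Rearranging in reverse gives $\|\widehat{x}\| = \fl(\|\widehat{x}\|)\sqrt{1+\vartheta_{n+2}'}$ with $\vartheta_{n+2}' := 1/(1+\vartheta_{n+2})-1$. Invoking the $\pm 1$-exponent product form of~\eqref{eq:err_propag} (which is exactly why the paper defines $\gamma_n$ to be stable under inversion) keeps $|\vartheta_{n+2}'|\leq \gamma_{n+2}$. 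Then
\[
|\fl(\|\widehat{x}\|) - \|\widehat{x}\|| = \fl(\|\widehat{x}\|)\,|\sqrt{1+\vartheta_{n+2}'}-1|\leq \fl(\|\widehat{x}\|)\,\beta_{n+2},
\]
where the last inequality uses monotonicity of $t\mapsto \sqrt{1+t}-1$ on $[-\gamma_{n+2},\gamma_{n+2}]$ together with the definition of $\beta_{n+2}$ as the max over the two signed extremes.

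Finally, $\beta_{n+2}\leq 1$ follows directly from $\gamma_{n+2}<1$: $\sqrt{1-\gamma_{n+2}}\in(0,1)$ gives $|\sqrt{1-\gamma_{n+2}}-1|<1$, and $\sqrt{1+\gamma_{n+2}}<\sqrt{2}<2$ gives $|\sqrt{1+\gamma_{n+2}}-1|<1$. The main obstacle, in my view, is the inversion step $|\vartheta_{n+2}'|\leq \gamma_{n+2}$: it is where the distinction between the product-form $\theta_n$ and the generic bounded $\vartheta_n$ of Section~\ref{subsec:notation} becomes delicate, and one must rely on~\eqref{eq:err_propag} to avoid the worse bound $\gamma_{n+2}/(1-\gamma_{n+2})$ that a naive $\vartheta$-inversion would yield.
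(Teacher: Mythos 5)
Your proof follows essentially the same route as the paper's: bound the dot-product error by $\gamma_n$, absorb the exactly rounded square root's $(1+\delta)^2$ factor into $\gamma_{n+2}$, and read off $\beta_{n+2}$ from $|\sqrt{1+\vartheta_{n+2}}-1|$. Two of your side claims are slightly stronger than what actually holds --- the inequality $\gamma_n+\gamma_2+\gamma_n\gamma_2\leq\gamma_{n+2}$ fails for the formula $\gamma_n=n\meps$ by a $2n\meps^2$ term (which the paper explicitly discards as negligible rather than proves away), and the inversion bound $|1/(1+\vartheta_{n+2})-1|\leq\gamma_{n+2}$ cannot be obtained from~\eqref{eq:err_propag} because $\vartheta_{n+2}$ is a $\vartheta$-type rather than $\theta$-type perturbation (the paper's notation section stresses exactly this distinction) --- but these are precisely the two points where the paper's own proof is equally informal, so your argument is correct to the same degree of rigor.
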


\begin{proof}
	The error on the dot product computation can be modeled as~\cite{higham2002accuracy},
	\begin{equation*}
		\fl(\xkfp{}^T \xkfp{}) = \xkfp{1}^2(1+\delta)^n\displaystyle + \sum_{i=2}^n \xkfp{i}^2 (1+\delta)^{n+2-i}.
	\end{equation*}
	As a consequence, there exists $\vartheta_n$ such that
	\begin{equation*}
		\fl(\|\xkfp{}\|^2) = \|\xkfp{}\|^2(1+\vartheta_n), \quad |\vartheta_n|\leq \gamma_n.
	\end{equation*}
	Since the square root computation is exactly rounded~\cite{ieee754}, $\forall \hat{y} \in \mathbb{F}$, $\fl(\sqrt{\hat{y}}) = \sqrt{\hat{y}}(1+\delta)$ and it follows that
	\begin{equation*}
		\begin{array}{ll}
			\fl(\sqrt{\fl(\|\xkfp{}\|^2)}) & = \sqrt{\fl(\|\xkfp{}\|^2)}(1+\delta) \\
			& = \sqrt{\|\xkfp{}\|^2(1+\vartheta_n)}(1+\delta)\\
			& = \|\xkfp{}\|\sqrt{(1+\vartheta_n)(1+\delta)^2}.\\
		\end{array}
	\end{equation*}
	Furthermore, because \(\gamma_n = nu\),
	\begin{equation*}
		\begin{array}{ll}
			(1+\vartheta_n)(1+\delta)^2 &\leq (1+\gamma_n)(1+\gamma_2)\\
			& = 1+(n+2)u+2nu^2\\
			& = 1+\gamma_{n+2} +2nu^2.
		\end{array}
	\end{equation*}
	Since $u \ll 1$, $2nu^2 = o((n+2)u)$ and we ignore this last term.
	As a consequence, there exists $\vartheta_{n+2}$ such that,
	\begin{equation*}
		\fl(\sqrt{\fl(\|\xkfp{}\|^2)}) = \|\xkfp{}\|\sqrt{1+\vartheta_{n+2}}.
	\end{equation*}
	Due to Assumption~\ref{ass:gamma_n_bound}, $\fl(\sqrt{\fl(\|\xkfp{}\|^2)})\geq 0$.
	Therefore,
	\begin{equation*}
		\begin{array}{ll}
			|\fl(\|\xkfp{}\|)-\|\xkfp{}\|\,| & = \fl(\|\xkfp{}\|)\,|1-\sqrt{1+\vartheta_{n+2}}|\\
		\end{array}
	\end{equation*}
	and since $|1-\sqrt{1+\vartheta_{n+2}}|\leq \max\left(|1-\sqrt{1-\gamma_{n+2}}|,|1-\sqrt{1+\gamma_{n+2}}|\right)$,
	we finally have,
	\begin{equation*}
		|\fl(\|\xkfp{}\|)-\|\xkfp{}\|\,|  \leq \fl(\|\xkfp{}\|)\,\max\left(|1-\sqrt{1-\gamma_{n+2}}|,|1-\sqrt{1+\gamma_{n+2}}|\right). \tag*{\qed}
	\end{equation*}
	\renewcommand{\qed}{}
\end{proof}

\section{Multi-Precision Algorithm}

\subsection{Algorithm Description}%
\label{sec:MPR2}
Algorithm~\ref{alg:qrfp} is an adaptation of Algorithm~\ref{alg:qr} that implements strategies to deal with the errors due to finite precision computations details in Section~\ref{sec:finite precision}.
Its proof of convergence is given in appendix.

\begin{algorithm}
	\caption{\textbf{MPR2} : Multi-precision quadratic regularization algorithm}%
	\label{alg:qrfp}
	
	\begin{algorithmic}
		\State \textbf{Step 0: Initialization:} Initial point $\xkfp{0}$, initial value $\sigkfp{0}$, minimal value $\sigkfp{\min}$ for $\sigkfp{}$, final gradient accuracy $\epsilon$, formula for $\gamma_n$.
		Constant values $\eta_0$, $\eta_1$, $\eta_2$, $\gamma_1$, $\gamma_2$, $\gamma_3$, $\wgaugbound$ such that,
		\begin{equation}
			\label{eq:param_cond}
			0<\eta_1\leq \eta_2<1\quad 0<\gamma_1<1<  \gamma_2 \leq \gamma_3, \quad \eta_0<\tfrac{1}{2}\eta_1 \quad \eta_0+\dfrac{\wgaugbound}{2} \leq \tfrac{1}{2}(1-\eta_2)
		\end{equation}
		Set $k=0$, compute $\widehat{f}_0 = \ffp(x_0)$, select $\gpreck{0}$ and $\cpreck{0}$.
		\State \textbf{Step 1: Check for termination:} If $k = 0$ or $\rho_{k-1} \geq \eta_1$ (previous iteration successful), compute $\widehat{g}_k = g(\xkfp{k})$.
		Compute $\widehat{\|\widehat{g}_k\|} = fl(\|\widehat{g}_k\|)$.
		Terminate if \begin{equation}
			\label{eq:stop_crit}
			\widehat{\|\widehat{g}_k\|} \leq \dfrac{1}{1+\beta_{n+2}(\mepsk{g}{k})}\dfrac{\epsilon}{1+\eg(\xkfp{k})}
		\end{equation}
		\State \textbf{Step 2: Step calculation: } Compute $\skfp{k} = \fl(\widehat{g}_k/\sigkfp{k})$.\\
		Compute $\widehat{\phi}_k = \fl(\|\xkfp{k}\| / \|\skfp{k}\|)$.\\
		Define $\phi_k = \widehat{\phi}_k\dfrac{1+\beta_{n+2}(\mepsk{x}{k})}{1-\beta_{n+2}(\mepsk{g}{k})}(1+\mepsk{g}{k})$.\\
		Define $\umixk{k} = \mepsk{g}{k} + \mepsk{c}{k} + \mepsk{g}{k}\mepsk{c}{k}$.\\
		Define 
		\begin{equation}
			\label{eq:muk}
			\mu_{k} = \dfrac{\alpha_{n+1}(\mepsk{g}{k}) \eg(\xkfp{k})(1+\lambda_k) + \alpha_{n+1}(\mepsk{g}{k}) \lambda_k + \mepsk{g}{k}+ \gamma_{n+1}(\mepsk{g}{k})\alpha_{n+1}(\mepsk{g}{k})}{1-\mepsk{g}{k}}.
		\end{equation} If 
		\begin{equation}
			\label{eq:muk_cond}
			\mu_{k} \leq \wgaugbound
		\end{equation} is not satisfied: increase $\gpreck{k}$ and go to Step 1 (at ``compute $\gfp_k$'' step) or increase $\cpreck{k}$ and redefine $\umixk{k}$ and $\mu_{k}$.
		Terminates if no available FP formats $(\cpreck{k},\gpreck{k})$ ensure the inequality.\\
		
		Compute $\ckfp{k} = \fl(\xkfp{k}+\skfp{k})$ and cast $\ckfp{k}$ into the format of index $\cpreck{k}$.\\
		Compute approximated Taylor series decrease $\tsdifffp_k = \fl(\widehat{g}_k^T\skfp{k})$.\\
		
		\State \label{step:obj_eval} \textbf{Step 3: Evaluate the objective function: }
		Choose $\fpreck{k}\geq \cpreck{k}$ such that 
		\begin{equation}
			\label{eq:fprec_c_cond}
			\ef(\widehat{c}_{k})\leq \eta_0 \tsdifffp_k.
		\end{equation}
		Compute $f_k^+ = \ffp(\ckfp{k}).$
		Terminates if no such precision is available.\\
		If $\ef(\xkfp{k})> \eta_0 \tsdifffp_k$, choose $\fprecrecompk{k}>\fpreck{k-1}$ as new format index for $\ffp$ such that 
		\begin{equation}
			\label{eq:fprec_x_cond}
			\ef(\widehat{x}_{k})\leq \eta_0 \tsdifffp_k.
		\end{equation}
		Re-compute $f_k = \ffp(\xkfp{k})$ with $\fprecrecompk{k}$.
		Terminates if no such precision is available.\\
		
		\State \textbf{Step 4: Acceptance of the trial point: } Define the ratio
		\begin{equation*}
			\rho_k = \dfrac{\fkfp{k} - \fkcfp{k}}{\tsdifffp_k}.
		\end{equation*}
		If $\rho_k \geq \eta_1$, then $\xkfp{k+1} =\ckfp{k}$, $\fkfp{k+1} = \fkcfp{k}$.
		Otherwise, set $\xkfp{k+1} = \xkfp{k}$, $\fkfp{k+1} = \fkfp{k}$.\\
		Select $\gpreck{k+1} \geq \xpreck{k}$,  select $\cpreck{k+1}$.
		
		\State \textbf{Step 5: Regularization  parameter update: }
		\begin{equation}
			\label{eq:sigma_update}
			\sigkfp{k+1} \in \left\{
			\begin{array}{ll}
				[\max(\sigkfp{\min},\gamma_1\sigkfp{k}),\sigkfp{k}] & \text{if } \rho_k \geq \eta_2\\
				
				[\sigkfp{k},\gamma_2\sigkfp{k}] & \text{if }\rho_k \in [\eta_1,\eta_2) \\
				
				[\gamma_2\sigkfp{k},\gamma_3\sigkfp{k}] & \text{if } \rho_k < \eta_1 \\
			\end{array}
			\right.
		\end{equation}
		$k = k+1$, go to Step 1.
	\end{algorithmic}
\end{algorithm}

At the initialization step, there is no modification on how the parameters should be chosen.
The only change is that the initial FP format $\cpreck{0}$ of the candidate must be chosen.

At Step 1, the factor $1/(1+\beta_{n+2})$ is added in Condition~\eqref{eq:stop_crit} to take into account the error due to norm computation of the approximated gradient $\gfp_k$ (see Lemma~\ref{lem:norm_inexact}). The new condition ensures $\nabla f(x_k)\leq \epsilon$ despite the gradient evaluation and norm computation errors,
\begin{equation}
	\label{eq:stop_cond_1}
	\begin{array}{ll}
		\|\nabla f(\xkfp{k})\| & \leq \|\nabla f(\xkfp{k}) -\gkfp{k}\| + \|\gkfp{k}\|\\
		&\leq (1+\eg(\xkfp{k}))\|\gkfp{k}\|\\
		&\leq (1+\eg(\xkfp{k}))\left(| \,\|\gkfp{k}\| - \fl(\|\gkfp{k}\|)| + \fl(\|\gkfp{k}\|)\right)\\
		&\leq (1+\eg(\xkfp{k}))(1+\beta_{n+2}(\mepsk{g}{k}))\fl(\|\gkfp{k}\|)
	\end{array}
\end{equation}

At Step 2, the convergence condition $\eg(x_k)\leq \wgaugbound$ becomes $\mu_{k}\leq \wgaugbound$. $\mu_k$ accounts for the error on gradient evaluation, and the errors due to finite precision computations.
The coefficient $\phi_k$ takes into account the norm computation errors that occur when computing $\widehat{\phi}_k$, and is a guaranteed upper bound on the ratio $\|\xkfp{k}\|/\|\skfp{k}\|$.
Indeed, due to the norm computation error (see Subsection~\ref{subsec:norm}),
\begin{equation*}
	\dfrac{\|\xkfp{k}\|}{\|\skfp{k}\|}  \leq \dfrac{\fl(\|\xkfp{k}\|)}{\fl(\|\skfp{k}\|)} \dfrac{1+\beta_{n+2}(\mepsk{x}{k})}{1-\beta_{n+2}(\mepsk{g}{k})}
\end{equation*}
Since the format of $\fl(\|\xkfp{x}\|)$ is $\xpreck{k}$ and the one of $\fl(\|\skfp{x}\|)$ is $\gpreck{k}$, and $\gpreck{k}\geq\xpreck{k}$ (see introductory remarks), it follows that,
\begin{equation*}
	\begin{array}{ll}
		\dfrac{\|\xkfp{k}\|}{\|\skfp{k}\|}
		& \leq \fl\left(\dfrac{\fl(\|\xkfp{k}\|)}{\fl(\|\skfp{k}\|)}\right)(1+\mepsk{g}{k}) \dfrac{1+\beta_{n+2}(\mepsk{x}{k})}{1-\beta_{n+2}(\mepsk{g}{k})}\\
		& = \widehat{\phi}_k\dfrac{1+\beta_{n+2}(\mepsk{x}{k})}{1-\beta_{n+2}(\mepsk{g}{k})}(1+\mepsk{g}{k}) = \phi_k.
	\end{array}
\end{equation*}
The coefficient $\umixk{k}$ corresponds to the virtual rounding errors accounting for inexact computation of $\ckfp{k}$ and the casting of $\ckfp{k}$ into a lower FP format.
By lowering the FP format precision, one enables to evaluate the objective at iteration $k$ or the gradient at iteration $k+1$ (if k is successful: $\xkfp{k+1} = \ckfp{k}$) with a lower FP format (see details in Subsection~\ref{subsec:candidate}). 

The coefficient $\mu_k$ in Algorithm~\ref{alg:qrfp} aggregates most of the errors due to finite precision computation listed in Section~\ref{sec:finite precision}, and is the counterpart of $\eg$ in Algorithm~\ref{alg:qr}.
The coefficient $\gamma_{n+1}$ and $\alpha_{n+1}$ account for the inexact approximated model decrease (see Subsection~\ref{subsec:bet-decrease}).
The coefficient $\lambda_{k}$, given in~\eqref{eq:lambda_def}, includes $\phi_k$ the upper bound on $\|\xkfp{k}\| / \|\skfp{k}\|$ and takes into account the inexact candidate computation explained in Section~\ref{subsec:candidate}.
The denominator $1-\mepsk{g}{k}$ account for the difference in norm between $\gfp(x_k)$ and $\gd_k$ (see Lemma~\ref{lem:gd_nabla_diff}).
Note that if we assume that the computations are done with infinite precision, \emph{i.e.} $\umixk{k} = \mepsk{g}{k}=0$, one has $\mu_k = \omega(\xkfp{k})$ and retrieve the condition in Algorithm~\ref{alg:qr}.
As such, all the errors due to finite precision computations aggregated in $\mu_k$ can simply be interpreted as noise on the gradient degrading the search direction.
The condition $\mu_k \leq \kappa_\mu$ ensures the convergence of Algorithm~\ref{alg:qrfp}.
If $\mu_{k}>\wgaugbound$, one can lower $\mu_k$ by decreasing $\cpreck{k}$, and by extension lower $\umixk{k}$ and therefore $\mu_{k}$.
The second option is to lower $\gpreck{k}$ and to recompute the gradient at Step 1 with a higher precision in order to lower $\eg(\xkfp{k})$.

Step 3 remains similar in Algorithm~\ref{alg:qr}.
A stopping criterion is added if the needed objective function evaluation accuracy cannot be achieved with the provided FP formats.
This criterion is needed to ensure the convergence.

At Step 4, the only modification is the selection of the FP format for the gradient evaluation at the next step, and the FP format of the candidate.
Step 5 remains the same, since we suppose that no rounding errors occur when computing $\sigkfp{k+1}$.

\subsection{Note on \texorpdfstring{$\widehat{\rho_k}$}{rhok}}
As indicated in Section~\ref{subsec:notation}, we assume that $\rho_k$ is defined in Algorithm~\ref{alg:qrfp}, that is, $\rho_k$ is computed with infinite precision.
This assumption is reasonable if a FP format with a precision higher than the formats used in Algorithm~\ref{alg:qrfp} is used to compute $\rho_k$.
However, if such precision is not available, it is critical to take into account the rounding errors occurring when computing $\rho_k$.
Indeed, if $\fkfp{k}$ and $\fkcfp{k}$ are large and close to each other, a ``catastrophic cancellation'' might occur at the numerator, and the value computed for $\rho_k$ is not relevant.

The rounding error occurring is such that,
\begin{equation*}
	\widehat{\rho_k} = \fl\left(\dfrac{\fkcfp{k}-\fkcfp{k}}{\tsdifffp_k}\right)=\dfrac{\fkcfp{k}(1+\delta)^2-\fkcfp{k}(1+\delta)^2}{\tsdifffp_k},
\end{equation*}
with delta bounded as $|\delta|\leq \mepsk{\rho}{k} =  \min(\mepsk{f}{k},\mepsk{f}{k-1},\mepsk{g}{k})$.
The error between the exact value of $f$ and $\fkfp{k}(1+\delta)^2$ can be bounded as,
\begin{equation*}
	|f(\xkfp{k}) - \fkfp{k}(1+\theta_2)| \leq |f(\xkfp{k}) - \fkfp{k}| + |\fkfp{k} - \fkfp{k}(1+\theta_2)| \leq \ef(\xkfp{k}) + \fkfp{k}\gamma_2(\mepsk{\rho}{k}) ,
\end{equation*}  and similarly,
\begin{equation*}
	|f(\ckfp{k}) - \fkcfp{k}(1+\delta)^2|  \leq \ef(\ckfp{k})+|\fkcfp{k}|\gamma_2(\mepsk{\rho}{k}).
\end{equation*}
Therefore, the inexact computation of $\rho_k$ can be taken into account by adapting the conditions at Step 3 as,
\begin{equation*}
	\ef(\xkfp{k})+\gamma_2(\mepsk{\rho}{k})|\fkfp{k}| \leq \eta_0\tsdifffp_k, \quad \ef(\ckfp{k})+\gamma_2(\mepsk{\rho}{k})|\fkcfp{k}| \leq\eta_0\tsdifffp_k.
\end{equation*}

\section{Numerical Tests}
\label{sec:num_experiments}

MPR2 is implemented in the Julia MultiPrecisionR2.jl package~\cite{mpr2} as part of JuliaSmoothOptimizer (JSO) organisation~\cite{orban-siqueira-optimizationproblems-2021}.
This implementation is robust to many pitfalls generated by the use of multiple FP formats and in particular low-precision, low-range FP formats (underflow, overflow, memory allocation for $x$, $c$ and $s$, etc.).
The detailed implementation of MPR2 is out of the scope of this paper, and is not described here. 
The reader can refer to the MultiPrecisonR2.jl package for implementation details.

The numerical results presented below can be reproduced from the dedicated file of the package.~\footnote{https://github.com/JuliaSmoothOptimizers/MultiPrecisionR2/blob/mpr2\_paper/docs/src/mpr2\_numerical\_test.md}
The results are obtained by running MPR2 on a standard laptop that does not support half precision natively.
Therefore, the computational effort savings presented in Section~\ref{subsec:eff_imp} are estimations.
The numerical results are obtained over a set of 164 unconstrained problems implemented in OptimizationProblems.jl~\cite{orban-siqueira-optimizationproblems-2021} package, with dimensions ranging from 1 to 100 variables.
The FP formats employed are half, single and double.
Assumption~\ref{ass:gamma_n_bound} is satisfied for all the problems, with $u_{\max} = 1/2048$ for half precision.
The tolerance on the first-order criterion is set to $\epsilon = \sqrt{\epsilon_M(\texttt{Float64})} \approx 1.5e^{-8}$.
The maximum number of iteration of MPR2 is set to 10,000.
All the defined values are computed with quadruple precision.

\subsection{Precision Selection Strategy}

The strategy implemented for selecting the evaluation FP formats and $\pi_c$ are the following.

At Step 2 of Algorithm~\ref{alg:qrfp}, the index $\gpreck{k}$ is selected as $\gpreck{k} = \cpreck{k}$. If $\mu_k>\kappa_\mu$, one can update either $\cpreck{k}$ to decrease $\umixk{k}$ or $\gpreck{k}$ to decrease $\eg(\xkfp{k})$ on which $\mu_k$ depends.
The update rule is 
\begin{equation*}
	\left\{
	\begin{array}{ll}
		\cpreck{k} = \cpreck{k} +1 & \text{ if }\cpreck{k}<\gpreck{k},\\
		\gpreck{k} = \gpreck{k} +1 & \text{otherwise},
	\end{array}
	\right.
\end{equation*} 
until $\mu_k<\kappa_\mu$.

At Step 3 of  Algorithm~\ref{alg:qrfp}, the index $\fpreck{k}$ for objective function evaluation at $\ckfp{k}$ is chosen as the index of the least accurate FP format whose unit roundoff $\mepsk{f}{k}$ satisfies
\begin{equation*}
	\ef(\xkfp{k}) \dfrac{\ffp(\xkfp{k})-\tsdifffp_k}{\ffp(\xkfp{k})} \dfrac{\mepsk{k}{f}}{\mepsk{k-1}{f}} \leq \eta_0\tsdifffp_k.
\end{equation*}
The left-hand side is a prediction of $\ef(\ckfp{k})$ based on the assumptions that $\ffp(\ckfp{k}) \approx \ffp(\xkfp{k}) - \tsdifffp_k$ and that $\ef$ grows linearly with the unit roundoff and is proportional to $f$.
These two assumptions are consistent with our observations over the test set.
If the convergence condition~\eqref{eq:fprec_c_cond} is not satisfied with the above selection strategy, $\fpreck{k}$ is increased by 1 until~\eqref{eq:fprec_c_cond} is ensured.

If it is necessary to re-evaluate $\ffp(\xkfp{k})$ to decrease $\ef(\xkfp{k})$ and enforce satisfy~\eqref{eq:fprec_x_cond}, the re-evaluation index $\fprecrecompk{k}$ is selected as the index of least accurate FP format whose unit roundoff $\mepsk{f^-}{k}$ satisfies
\begin{equation*}
	\ef(\xkfp{k})\dfrac{\mepsk{f^-}{k}}{\mepsk{f}{k-1}} \leq \eta_0\tsdifffp_k.
\end{equation*}
The left-hand side is a prediction of $\ef(\xkfp{k})$ evaluated with $\mepsk{f^-}{k}$ based on the assumption that $\ef(\xkfp{k})$ is linearly dependent on the unit roundoff.
This assumption is corroborated by our observations over the test set of problems.
If~\eqref{eq:fprec_x_cond} is still not satisfied with the above selection strategy, $\fprecrecompk{k}$ is increased by 1 until~\eqref{eq:fprec_x_cond} is satisfied.

At Step 4 of Algorithm~\ref{alg:qrfp}, the index $\cpreck{k}$ is chosen as $\cpreck{c} = \max(1,\fpreck{k}-1)$. This enables to lower the precision of the candidate and, by extension, to allow lower precision evaluation of the objective function and the gradient (see details in Section~\ref{subsec:candidate}).

\subsection{Guaranteed Implementation}

One challenge with Algorithm~\ref{alg:qrfp} is that the error bounds  $\omega_f$ and $\omega_g$ on objective and gradient must be available.
In the general case, obtaining analytical expression of these bounds might not be possible.
That is why the standard, or guaranteed, implementation of Algorithm~\ref{alg:qrfp} relies on interval arithmetic provided by the  JuliaIntervals.jl library~\cite{juliaintervals} to compute the error bounds $\omega_f$ and $\omega_g$.
Interval arithmetic enables to compute guaranteed bounds on the exact values (which cannot be computed in finite-precision) of the objective and the gradient, by accounting for the worst possible case of rounding errors accumulating during the evaluations.
From these bounds, it is possible to derive $\omega_f$ and $\omega_g$.
The results presented below are obtained with the guaranteed implementation of Algorithm~\ref{alg:qrfp}, using interval arithmetic.


Table~\ref{tab:algo_param} lists the values of the parameters of Algorithm~\ref{alg:qrfp}.
\begin{table}[h]
	\begin{center}
		\begin{minipage}{\linewidth}
			\caption{MR2 parameters values}\label{tab:algo_param}%
			\begin{tabular}{@{}lllllll@{}}
				\toprule
				$\gamma_1$& $\gamma_2$ & $\gamma_3$ & $\eta_0$ &  $\eta_1$ & $\eta_2$ & $\wgaugbound$ \\
				1/2 & 1 & 2 & 0.05 & 0.1 & 0.7 & 0.2\\
				\bottomrule
			\end{tabular}
		\end{minipage}
	\end{center}
\end{table}
Over the test problems, the number of evaluations and \emph{successful} evaluations are computed.
An evaluation is successful if there is no need to re-evaluate the objective or the gradient with a higher precision FP format, and gives a metric to evaluate the precision selection strategy detailed above.
Table~\ref{tab:num_results} displays the number of problems for which Algorithm~\ref{alg:qrfp} reaches a first order critical point (FO), reaches the maximum number of iteration (MI), and for which the convergence conditions at Step 2 and 3 are not satisfied (F).
Note that 18 problems out of the 164 were ignored due to operators not supported by JuliaInterval.jl or other issues linked to this package.
Table~\ref{tab:num_results} also displays the percentage of evaluations performed in a given FP format for the gradient and the objective, and the success rate for each of the FP.
The success rate is the ratio between the number of successful evaluations and the total number of evaluations in a given format.

\begin{table}[h]
	\begin{center}
		\begin{minipage}{\linewidth}
			\caption{Percentage of total evaluations of objective and gradient for the given FP formats}\label{tab:num_results}%
			\begin{tabular}{@{}lllllllll@{}}
				\toprule
				FO & MI & F & \multicolumn{3}{c}{\% of obj.\ eval.\ (success rate in \%)} & \multicolumn{3}{c}{\% of grad.\ eval.\ (success rate in \%)}\\
				& & &16 bits & 32 bits & 64 bits & 16 bits & 32 bits & 64 bits\\
				\midrule
				46 & 65 & 35 & 0.1 (48.5) & 28.3 (91) & 71.6 (100) & 1.1 (87.6) & 79.5 (100) & 19.4 (100)\\
				\bottomrule
			\end{tabular}
		\end{minipage}
	\end{center}
\end{table}
The results in Table~\ref{tab:num_results} show that Float64 does not offer sufficient precision for objective or gradient evaluations to satisfy the convergence conditions at Step 2 and 3.

Table~\ref{tab:num_results} also shows that the objective function is evaluated with more precision than the gradient.
The reason is that the error bounds provided by interval arithmetic are, to a certain extent, proportional to the exact value.
Since the gradient tends to zero over the iterations, $\eg$ decreases accordingly and 32 bits offers in most case sufficient precision.
However, the value of the objective function at a critical point is not 0 in the general case, and the convergence conditions at Step 3 requires the error on the objective to be lower than $\eta_0 \tsdifffp_k$ which is proportional to the square of the norm of the gradient.
As a consequence, the highest precision is employed to evaluate the objective, especially for the last iterations where $\tsdifffp_k$ is small.

16 bits evaluations are marginal. 
This is due to the fact that overflow often occurs, or the precision is not sufficient for objective evaluation which can happens after only few iterations if the model decrease gets too small (see conditions~\eqref{eq:fprec_c_cond} and~\eqref{eq:fprec_x_cond}).
For gradient evaluation, the unit roundoff of 16 bits (1/2048) limits its use to value of $\phi_k$ lower than 2048 (see $\mu_k$ formula~\eqref{eq:muk}).
However, this value of $\phi_k$ can be reached in few iterations, after which only small step sizes (compare to the incumbent $\xkfp{k}$) can ensure sufficient objective decrease.

\subsection{Efficient Implementation}
\label{subsec:eff_imp}

As highlighted by the numerical results displayed in Table~\ref{tab:num_results}, the guaranteed implementation of MPR2 as described by Algorithm~\ref{alg:qrfp} stops because of lack of precision (F) on a large proportion of problems.
This is due to the combined facts that: 1. Algorithm~\ref{alg:qrfp} stops if the error bounds ($\omega_f$ or $\mu$) are too large and 2. these error bounds account for the worst-case of rounding error.
In practice, it is unlikely that these error bounds, although guaranteed, are reliable estimates of the actual evaluation errors.
Furthermore, interval evaluation is highly time consuming compared with classical evaluation, and makes its use prohibitive for many problems.
That is why we propose \pmprt, a relaxed and efficient version of Algorithm~\ref{alg:qrfp} that aims to increase the use of low-precision FP formats and reach convergence to a first-order critical point based on simple evaluation error models.
\pmprt has the following features:
\begin{itemize}
	\item the objective and gradient errors are estimated as
	\begin{equation}
		\label{eq:rel_model}
		\omega_f(x_k) = |\ffp(x_k)|2u, \quad \omega_g(x_k) = 2u,
	\end{equation}
	with $u$ the corresponding unit roundoff associated to $\fpreck{k}$ or $\gpreck{k}$,
	\item \pmprt does not stop if the conditions on $\mu_k$ and $\omega_f(\xkfp{k})$, $\omega_f(\ckfp{k})$, are not met with the highest precision FP format.
\end{itemize}
We also propose to decrease $\mu$ by a factor $a \in ]0,1]$ to relax the condition~\eqref{eq:muk_cond} into $a\mu \leq \kappa_\mu$ to assess whether even more computational effort can be saved.
We compare \pmprt run with Float16, Float32 and Float64 with R2, the generic single precision version of MPR2 that does not take any finite-precision error into account, implemented in the JSOSolvers.jl package~\cite{jso_solver} run with Float64.
Both \pmprt and R2 use the parameters given in Table~\ref{tab:algo_param}.
For the sake of comparison, the stopping criterion for \pmprt is set to $\widehat{\|\widehat{g}_k\|} \leq \epsilon$, which is the one used in R2.
Table~\ref{tab:obj_res} (resp. Table~\ref{tab:grad_res}) displays the number of problem solved by \pmprt and R2 as well as the percentage of total objective function (resp. gradient) evaluations over the whole set of problems in each FP format with the associated success rate, and the estimated computational effort ratio between \pmprt and R2 for time and energy. The lower the ratio, the higher the effort savings with \pmprt.
The time and energy efforts are estimated from the following observation: dividing the computation precision by two divides the computation time by two and the energy consumption by four~\cite{galal2010energy}, \emph{e.g.}, evaluating the objective function in Float16 is four times faster and requires sixteen times less energy than evaluating it in Float64.

\begin{table}[h]
	\begin{center}
		\begin{minipage}{\linewidth}
			\caption{Estimated computational effort saved by \pmprt compared with R2 for objective evaluation.}\label{tab:obj_res}%
			\begin{tabular}{@{}lllllllll@{}}
				\toprule
				Algo & $a$ & \multicolumn{3}{c}{\% of obj.\ eval.\ (success rate in \%)} & time ratio & energy ratio & pb solved(/164) \\
				& & 16 bits & 32 bits & 64 bits & & & \\
				R2 & - & - &  - & 1.0(100) & 1.0 & 1.0 & 73 \\
				\pmprt & 1.0 & 0.8(92.7) & 54.5($>$99.9) & 44.6(100) & 0.633 & 0.512 & 67 \\
				\pmprt & 0.1 & 1.3(93) & 57($>$99.9) & 41.7(100) & 0.604 & 0.48 & 64\\
				\pmprt & 0.01 & 2.2(96.5) & 56.9($>$99.9) & 40.9(100) & 0.64 & 0.506 & 54\\
				\bottomrule
			\end{tabular}
		\end{minipage}
	\end{center}
\end{table} 

\begin{table}[h]
	\begin{center}
		\begin{minipage}{\linewidth}
			\caption{Estimated computational effort saved by \pmprt compared with R2 for gradient evaluation.}\label{tab:grad_res}%
			\begin{tabular}{@{}lllllllll@{}}
				\toprule
				Algo & $a$ & \multicolumn{3}{c}{\% of grad.\ eval.\ (success rate in \%)} & time ratio & energy ratio & pb solved(/164) \\
				& & 16 bits & 32 bits & 64 bits & & & \\
				R2 & - & - &  - & 1.0(100) & 1.0 & 1.0 & 73 \\
				\pmprt & 1.0 & 1.9(91.8) & 81.5($>$99.9) & 16.7(100) & 0.566 & 0.363 & 67\\
				\pmprt & 0.1 & 14.6(93.4) & 74.2($>$99.9) & 11.2(100) & 0.503 & 0.297 & 64\\
				\pmprt & 0.01 & 21(88.5) & 69.3($>$99.9) & 9.6(100) & 0.457 & 0.261 & 54\\
				\bottomrule
			\end{tabular}
		\end{minipage}
	\end{center}
\end{table} 

From Table~\ref{tab:obj_res} and Table~\ref{tab:grad_res}, it appears that \pmprt enables significant time and energy savings, but does not solve as many problems as R2.
The savings for gradient evaluations are inversely proportional to $a$, since smaller $a$ means larger values of $\mu$ are tolerated, and, by extension, larger gradient error $\omega_g$, permitting to employ lower precision FP formats.
The results also show that the proposed strategy for \pmprt has a high success rate, therefore avoiding unnecessary re-evaluation with a higher precision FP format.

The performance profiles comparing R2 and \pmprt with respect to time and energy effort for the objective and the gradient are displayed in Figure~\ref{fig:per_profile}.
The performance profiles show that \pmprt outperforms R2, in accordance to the results displayed in Table~\ref{tab:obj_res} and Table~\ref{tab:grad_res}.

\begin{figure}[htbp]
	\centering
	\subfigure[Objective time effort perfomance profile.]{\includegraphics[width=0.45\linewidth]{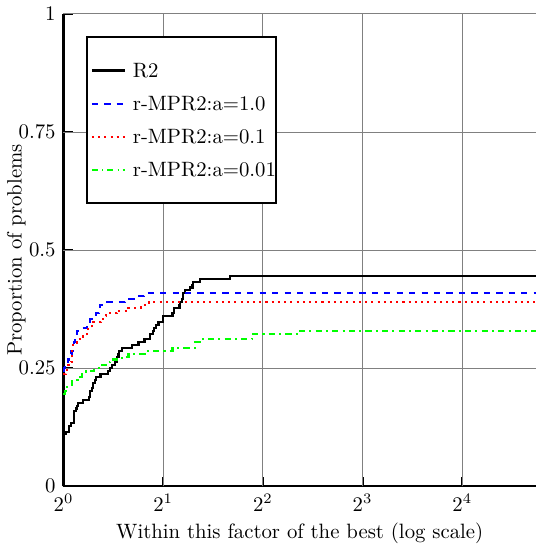}}
	\subfigure[Objective energy effort perfomance profile.]{\includegraphics[width=0.45\linewidth]{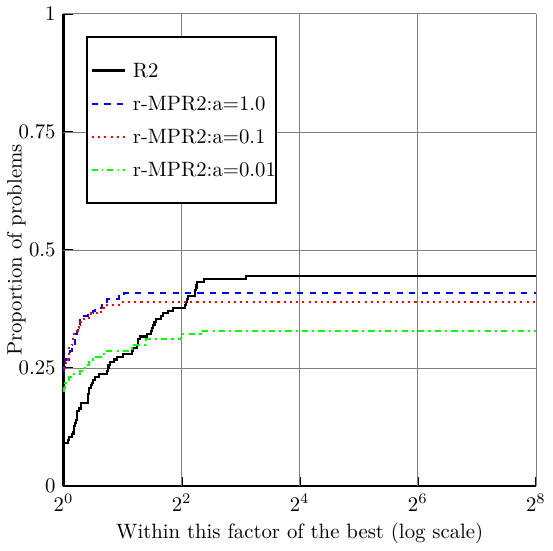}}
	\subfigure[Gradient time effort perfomance profile.]{\includegraphics[width=0.45\linewidth]{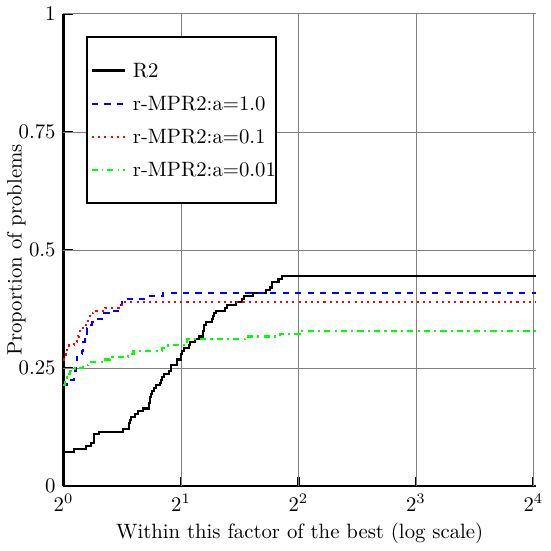}}
	\subfigure[Gradient energy effort perfomance profile.]{\includegraphics[width=0.45\linewidth]{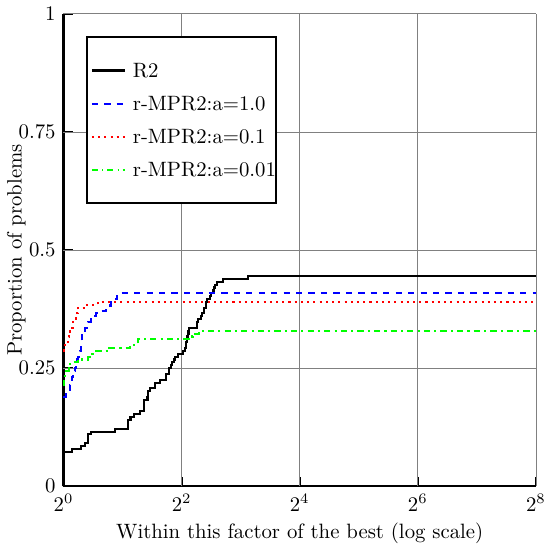}}
	\caption{Performance profiles comparing R2 and Algorithm~\ref{alg:qrfp} with respect to time and energy effort for objective and gradient.}
	\label{fig:per_profile}
\end{figure}

The limitation of \pmprt is that it does not solve as many problems as R2.
This is not necessarily intuitive: since \pmprt can perform evaluations with Float64 just as R2 and uses the same parameters (given in Table~\ref{tab:algo_param}), one could expect \pmprt to be able to solve all the problems that R2 solves.
From what we observed on the few problems that R2 solves and \pmprt does not, our interpretation is that the objective function error model is too optimistic, that is, the evaluation error is greater than what the model~\eqref{eq:rel_model} estimates.
This leads to computing $\ffp(\xkfp{k})$ and $\ffp(\ckfp{k})$ in FP formats that does not offer enough precision.
As a consequence, $\rho$ is not a reliable indicator for iteration success and can falsely be estimated lower than $\eta_1$ over several iterations, causing $\sigma$ to increase.
This is typically the case if $f(\xkfp{k}) > f(\ckfp{k})$ but $\fkfp{}(\xkfp{k}) < \fkfp{}(\ckfp{k})$, which case should be avoided thanks to conditions~\eqref{eq:fprec_x_cond} and~\eqref{eq:fprec_c_cond} and reliable bounds $\omega_f(\xkfp{k})$, $\omega_f(\ckfp{k})$.
In certain cases, $\xkfp{k}$ is a local minimum for the noisy function $\ffp$.
The iterates become ``trapped" in this neighbourhood since $\rho$ can only be negative and $\sigma$ increases, until $\ckfp{k} = \xkfp{k}$ because of candidate computation underflow.
Note that such behaviour does happen with R2, but not on as many problems since employing only double precision helps avoid unreliable values of $\rho$.

One strategy to avoid this shortcoming would be to detect when the candidate is ``trapped" and from then accept all the steps and not rely on the unreliable value of $\rho$.
This however raises the question of the relevance of trying to reach a first-order point in which neighbourhood the noise level on the objective evaluation makes it impossible to compare two close solutions. 

\section{Conclusion}
\label{sec:conclusion}

In this paper, we proposed MPR2 (Algorithm~\ref{alg:qrfp}), a multi-precision adaptation of R2 algorithm. 
We performed a comprehensive convergence analysis of Algorithm~\ref{alg:qrfp} that takes not only the objective function and gradient evaluation errors due to FP computations, but all the rounding errors (step and candidate computations, etc.) expect for few defined variables that involve few operators.
This convergence analysis provided convergence conditions that takes the rounding error into account, aggregated by the $\mu_k$ value.

We implemented rigorously Algorithm~\ref{alg:qrfp}, using interval analysis to guarantee the evaluation errors of the objective function and the gradient.
The numerical tests showed that the majority of the evaluations can be performed in Float32 on a bank of problems ranging between 1 and 100 variables, while Float16 is scarcely used.
It also appeared that Float64 does not offer sufficient accuracy to guarantee the convergence of Algorithm~\ref{alg:qrfp} to a first order critical point. 

We also proposed \pmprt, a version of Algorithm~\ref{alg:qrfp} based on relative error model for the objective and gradient evaluation, avoiding the time consuming use of interval evaluation.
The numerical results show that \pmprt enables significant computational savings compared with the original R2 algorithm, at the cost of lower robustness.

Future work could include extending the proposed analysis to second order methods.
Such an analysis is challenging since the step is not simply computed from the gradient but from a process minimizing a quadratic subproblem.
Such process (e.g. conjugate gradient) accumulates rounding errors that have to be taken into account.

\appendix

\section{Convergence Analysis}%
\label{sec:proof}

\begin{assumption}
	There exists $f_{low}$ such that,
	\begin{equation}
		\tag{AS.1}
		\label{as:flb}
		\forall x \in \R^n,\, f(x) \geq f_{low}.
	\end{equation} 
\end{assumption}

\begin{lemma}%
	\label{lem:f_pseudotaylor_diff_sum_inexact}
	For all $k>0$,
	\begin{equation}
		\label{eq:f_pseudotaylor_diff_sum_inexact}
		\begin{array}{ll}
			|\ffp(\xkfp{k}) - \ffp(\ckfp{k}) -\tsdifffp_k| \leq & 2\eta_0\tsdifffp_k + \tfrac{1}{2}L\|\skact{k}\|^2
			+ \lambda_k\|\nabla f(\xkfp{k})\|\,\|\skfp{k}\|\\ & +  |\gd_k^T\skfp{k}(1+\vartheta_{n+1}) - \nabla f(\xkfp{k})^T\skfp{k}|.
		\end{array}
	\end{equation}
\end{lemma}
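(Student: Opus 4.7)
The plan is to bound the expression by inserting the exact objective $f$ and its exact Taylor expansion as intermediate quantities, then to invoke the previously established bounds on evaluation errors, on the Lipschitz remainder, and on the discrepancy between the computed step and the actual step.

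First I would decompose via the triangle inequality by inserting $f(\xkfp{k})$ and $f(\ckfp{k})$:
\begin{equation*}
\ffp(\xkfp{k}) - \ffp(\ckfp{k}) - \tsdifffp_k = \bigl[\ffp(\xkfp{k}) - f(\xkfp{k})\bigr] + \bigl[f(\ckfp{k}) - \ffp(\ckfp{k})\bigr] + \bigl[f(\xkfp{k}) - f(\ckfp{k})\bigr] - \tsdifffp_k.
\end{equation*}
The first two bracketed terms are bounded in absolute value by $\ef(\xkfp{k})$ and $\ef(\ckfp{k})$, each of which is at most $\eta_0 \tsdifffp_k$ thanks to conditions~\eqref{eq:fprec_c_cond} and~\eqref{eq:fprec_x_cond}, producing the $2\eta_0\tsdifffp_k$ contribution.

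Next I would handle the $f(\xkfp{k}) - f(\ckfp{k})$ piece. Since $\ckfp{k} = \xkfp{k} + \skact{k}$ by~\eqref{eq:casted_c}--\eqref{eq:sact_def}, the Taylor-remainder bound~\eqref{eq:ft_diff} gives
\begin{equation*}
|f(\ckfp{k}) - f(\xkfp{k}) - \nabla f(\xkfp{k})^T\skact{k}| \leq \tfrac{1}{2} L \|\skact{k}\|^2,
\end{equation*}
so up to a term of size $\tfrac{1}{2}L\|\skact{k}\|^2$ I may replace $f(\xkfp{k}) - f(\ckfp{k})$ by $-\nabla f(\xkfp{k})^T\skact{k}$. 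I would then split $\skact{k} = \skfp{k} + (\skact{k} - \skfp{k})$ and use the intermediate estimate from the proof of Lemma~\ref{lem:actual_step_norm}, namely $\|\skact{k}-\skfp{k}\| \leq \umixk{k}(\|\skfp{k}\| + \|\xkfp{k}\|) \leq \lambda_k\|\skfp{k}\|$ via~\eqref{eq:phi_k_def} and~\eqref{eq:lambda_def}. Cauchy--Schwarz then yields the $\lambda_k \|\nabla f(\xkfp{k})\|\,\|\skfp{k}\|$ contribution.

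Finally, using Lemma~\ref{lem:pseudo_reduction} I would rewrite $\tsdifffp_k = -\gd_k^T\skfp{k}(1+\vartheta_{n+1})$. The leftover piece
\begin{equation*}
-\nabla f(\xkfp{k})^T \skfp{k} - \tsdifffp_k = \gd_k^T\skfp{k}(1+\vartheta_{n+1}) - \nabla f(\xkfp{k})^T\skfp{k}
\end{equation*}
is precisely the last absolute-value term in the claim, so collecting the five contributions through one final triangle inequality gives~\eqref{eq:f_pseudotaylor_diff_sum_inexact}. The only mildly delicate bookkeeping is making sure the sign and the $(1+\vartheta_{n+1})$ factor align correctly so that this leftover term appears verbatim rather than being expanded; it is tempting to distribute $1+\vartheta_{n+1}$ and then estimate, which would produce an $\alpha_{n+1}$-type factor instead of the clean form requested.
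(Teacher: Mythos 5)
Your proposal is correct and follows essentially the same route as the paper's proof: triangle inequality to peel off the two evaluation errors (each bounded by $\eta_0\tsdifffp_k$ via Step~3), the Taylor-remainder bound applied with the actual step $\skact{k}$, the bound $\|\skact{k}-\skfp{k}\|\leq \umixk{k}(\phi_k+1)\|\skfp{k}\| = \lambda_k\|\skfp{k}\|$ combined with Cauchy--Schwarz, and Lemma~\ref{lem:pseudo_reduction} to leave the final term $|\gd_k^T\skfp{k}(1+\vartheta_{n+1}) - \nabla f(\xkfp{k})^T\skfp{k}|$ untouched. The only difference is the order in which the intermediate quantities are inserted (the paper substitutes $\tsdifffp_k = -\gd_k^T\skfp{k}(1+\vartheta_{n+1})$ before handling the Taylor expansion), which is immaterial, and your closing remark about keeping that last term unexpanded is exactly the right bookkeeping.
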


\begin{proof}
	With the triangular inequality, one has,
	\begin{equation*}
		\begin{array}{ll}
			|\ffp(\xkfp{k}) - \ffp(\ckfp{k}) -\tsdifffp_k| & \leq |\ffp(\xkfp{k})-f(\xkfp{k})| + |\ffp(\xkfp{k})-f(\xkfp{k})| + |f(\xkfp{k}) - f(\widehat{c}_k) - \tsdifffp_k|.
		\end{array}
	\end{equation*}
	The termination condition at Step 3 and~\eqref{eq:fgerr} enables to bound, we derive
	\begin{equation}
		\label{eq:f_pseudotaylor_diff_sum_inexact_proof0}
		\begin{array}{ll}
			|\ffp(\xkfp{k}) - \ffp(\ckfp{k}) -\tsdifffp_k| & \leq 2\eta_0\tsdifffp_k + |f(\xkfp{k}) - f(\widehat{c}_k) - \tsdifffp_k|.
		\end{array}
	\end{equation}
	By Lemma~\ref{lem:pseudo_reduction}, the second term of the right-hand side of~\eqref{eq:f_pseudotaylor_diff_sum_inexact_proof0} can be bounded as,
	\begin{equation}
		\label{eq:f_pseudotaylor_diff_sum_inexact_proof1}
		\begin{array}{ll}
			& |f(\xkfp{k}) - f(\widehat{c}_k) - \tsdifffp_k|\\
			= &  |f(\xkfp{k}) - f(\widehat{c}_k) + \gd_k^T\skfp{k}(1+\vartheta_{n+1})|\\
			\leq & |f(\xkfp{k}) - f(\widehat{c}_k) + \nabla f(\xkfp{k})^T\skfp{k}| + |\gd_k^T\skfp{k}(1+\vartheta_{n+1}) - \nabla f(\xkfp{k})^T\skfp{k}|.
		\end{array}
	\end{equation}
	
	By triangular inequality and with~\eqref{eq:actual_candidate}, the first term of the right-hand side of~\eqref{eq:f_pseudotaylor_diff_sum_inexact_proof1} can be bounded as,
	\begin{equation}
		\label{eq:f_pseudotaylor_diff_sum_inexact_proof_2}
		\begin{array}{ll}
			|f(\xkfp{k}) - f(\widehat{c}_k) + \nabla f(\xkfp{k})^T\skfp{k}|
			\leq & |f(\xkfp{k}) - f(\widehat{c}_k) + \nabla f(\xkfp{k})^T\skact{k}| \\
			& + | \nabla f(\xkfp{k})^T\skfp{k} - \nabla f(\xkfp{k})^T\skact{k}|\\
		\end{array}
	\end{equation}
	With~\eqref{eq:ft_diff} one further has 
	\begin{equation}
		\label{eq:f_pseudotaylor_diff_sum_inexact_proof_4}
		\begin{array}{ll}
			|f(\xkfp{k}) - f(\ckfp{k}) + \nabla f(\xkfp{k})^T\skact{k}| & = |T(\skact{k})- f(\widehat{c}_k)|\\
			& = |T(\skact{k})- f(\xkfp{k}+\skact{k})|\\
			& \leq \tfrac{1}{2}L\|\skact{k}\|^2.
		\end{array}
	\end{equation}
	By triangular inequality and with~\eqref{eq:phi_k_def} and~\eqref{eq:lambda_def},
	\begin{equation}
		\label{eq:f_pseudotaylor_diff_sum_inexact_proof_3}
		\begin{array}{ll}
			|\nabla f(\xkfp{k})^T\skfp{k} - \nabla f(\xkfp{k})^T(\xkfp{k} \dmixk{k} +\skfp{k}(1+\dmixk{k}))| & = |\dmixk{k}\nabla f(\xkfp{k})^T(\xkfp{k}+\skfp{k})|\\
			& \leq \umixk{k}|\nabla f(\xkfp{k})^T(\xkfp{k}+\skfp{k})|\\
			& \leq \umixk{k}\|\nabla f(\xkfp{k})\|\,\|\xkfp{k}+\skfp{k}\|\\
			& \leq \umixk{k}\|\nabla f(\xkfp{k})\|\,(\|\xkfp{k}\|+\|\skfp{k}\|)\\
			& \leq \umixk{k}\|\nabla f(\xkfp{k})\|\,(\phi_k\|\skfp{k}\|+\|\skfp{k}\|)\\
			& \leq \lambda_{k}\|\nabla f(\xkfp{k})\|\,\|\skfp{k}\|.
		\end{array}
	\end{equation}
	
	Injecting Inequalities~\eqref{eq:f_pseudotaylor_diff_sum_inexact_proof_4} and~\eqref{eq:f_pseudotaylor_diff_sum_inexact_proof_3} into~\eqref{eq:f_pseudotaylor_diff_sum_inexact_proof_2} yields,
	\begin{equation}
		\label{eq:f_pseudotaylor_diff_sum_inexact_proof5}
		|f(\xkfp{k}) - f(\widehat{c}_k) - \nabla f(\xkfp{k})^T\skfp{k}| \leq \tfrac{1}{2}L\|\skact{k}\|^2 + \lambda_k\|\nabla f(\xkfp{k})\|\,\|\skfp{k}\|
	\end{equation}

	Putting together Inequalities~\eqref{eq:f_pseudotaylor_diff_sum_inexact_proof0},~\eqref{eq:f_pseudotaylor_diff_sum_inexact_proof1} and~\eqref{eq:f_pseudotaylor_diff_sum_inexact_proof5} yields~\eqref{eq:f_pseudotaylor_diff_sum_inexact}.
\end{proof}

\begin{lemma}%
	\label{lem:gd_nabla_diff}
	For all $k>0$,
	\begin{equation}
		\label{eq:gd_nabla_diff}
		\left|\dfrac{\gd_k^T\skfp{k}(1+\vartheta_{n+1}) - \nabla f(\xkfp{k})^T\skfp{k}}{\tsdifffp_k}\right| \leq \dfrac{\mepsk{g}{k}+\gamma_{n+1}(\mepsk{g}{k})\alpha_{n+1}(\mepsk{g}{k})+\alpha_{n+1}(\mepsk{g}{k})\eg(\xkfp{k})}{1-\mepsk{g}{k}}.
	\end{equation}
\end{lemma}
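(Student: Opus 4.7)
The plan is to bound the numerator by a triangle inequality and Cauchy--Schwarz, normalize using the explicit expressions already derived for $\tsdifffp_k$ and $\|\skfp{k}\|$, and then simplify the resulting rational expression using an algebraic identity for $\alpha_{n+1}$.

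\textbf{Step 1 (decomposition of the numerator).} Write
\begin{equation*}
(1+\vartheta_{n+1})\gd_k - \nabla f(\xkfp{k}) = \vartheta_{n+1}\gd_k + (\gd_k - \gkfp{k}) + (\gkfp{k} - \nabla f(\xkfp{k})),
\end{equation*}
pull $\skfp{k}$ out, and apply Cauchy--Schwarz and the triangle inequality. This yields an upper bound on $|\gd_k^T\skfp{k}(1+\vartheta_{n+1}) - \nabla f(\xkfp{k})^T\skfp{k}|$ of the form
\begin{equation*}
\bigl(\gamma_{n+1}(\mepsk{g}{k})\|\gd_k\| + \mepsk{g}{k}\|\gkfp{k}\| + \eg(\xkfp{k})\|\gkfp{k}\|\bigr)\,\|\skfp{k}\|,
\end{equation*}
where the three size bounds come respectively from $|\vartheta_{n+1}| \leq \gamma_{n+1}(\mepsk{g}{k})$, Lemma~\ref{lem:g_actual_diff} and the gradient evaluation model~\eqref{eq:fgerr}.

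\textbf{Step 2 (normalization).} Using Lemma~\ref{lem:pseudo_reduction}, substitute $\tsdifffp_k = \|\gd_k\|^2 (1+\vartheta_{n+1})/\sigkfp{k}$ in the denominator and $\|\skfp{k}\| = \|\gd_k\|/\sigkfp{k}$ in the numerator. The factor $\sigkfp{k}$ cancels and one copy of $\|\gd_k\|$ cancels, leaving
\begin{equation*}
\dfrac{\gamma_{n+1}(\mepsk{g}{k}) + \bigl(\mepsk{g}{k} + \eg(\xkfp{k})\bigr)\|\gkfp{k}\|/\|\gd_k\|}{1+\vartheta_{n+1}}.
\end{equation*}
Then apply $1/(1+\vartheta_{n+1}) \leq \alpha_{n+1}(\mepsk{g}{k})$ from~\eqref{eq:alphan} and $\|\gkfp{k}\|/\|\gd_k\| \leq 1/(1-\mepsk{g}{k})$ from Lemma~\ref{lem:g_actual_diff}.

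\textbf{Step 3 (algebraic simplification).} After these substitutions the upper bound reads
\begin{equation*}
\gamma_{n+1}(\mepsk{g}{k})\alpha_{n+1}(\mepsk{g}{k}) + \dfrac{\alpha_{n+1}(\mepsk{g}{k})\,\mepsk{g}{k}}{1-\mepsk{g}{k}} + \dfrac{\alpha_{n+1}(\mepsk{g}{k})\,\eg(\xkfp{k})}{1-\mepsk{g}{k}}.
\end{equation*}
The key identity is $\alpha_{n+1}-1 = \alpha_{n+1}\gamma_{n+1}$ (immediate from the definition $\alpha_{n+1} = 1/(1-\gamma_{n+1})$), which gives
\begin{equation*}
\gamma_{n+1}\alpha_{n+1} + \dfrac{\alpha_{n+1}\mepsk{g}{k}}{1-\mepsk{g}{k}} = \dfrac{\mepsk{g}{k} + \gamma_{n+1}\alpha_{n+1}}{1-\mepsk{g}{k}},
\end{equation*}
and collecting terms produces exactly the right-hand side of~\eqref{eq:gd_nabla_diff}.

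\textbf{Main obstacle.} The nontrivial step is not the inequalities themselves but arranging the two small denominators $1-\mepsk{g}{k}$ and $1-\gamma_{n+1}$ into the single factor $1-\mepsk{g}{k}$ that appears in the stated bound. This is where the identity $\alpha_{n+1} - 1 = \alpha_{n+1}\gamma_{n+1}$ plays the crucial role, absorbing the ``extra'' $\alpha_{n+1}$ in front of $\mepsk{g}{k}$ into the combined numerator. Care must also be taken, before Step 2, that $\tsdifffp_k > 0$ (guaranteed by~\eqref{eq:ts_pos} under Assumption~\ref{ass:gamma_n_bound}) so that dividing and preserving the inequality is legitimate.
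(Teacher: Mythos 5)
Your proof is correct and follows essentially the same route as the paper's: Cauchy--Schwarz on the numerator, normalization via Lemma~\ref{lem:pseudo_reduction} and $\|\skfp{k}\|=\|\gd_k\|/\sigkfp{k}$, and the bounds of Lemma~\ref{lem:g_actual_diff} together with~\eqref{eq:alphan}. The only difference is cosmetic: the paper groups the error as $(\gd_k-\gkfp{k})(1+\vartheta_{n+1})+\gkfp{k}\vartheta_{n+1}$ so the factor $(1+\vartheta_{n+1})$ cancels directly, whereas your split $\vartheta_{n+1}\gd_k+(\gd_k-\gkfp{k})$ leaves an extra $\alpha_{n+1}$ on the $\mepsk{g}{k}$ term that you correctly absorb with the identity $\alpha_{n+1}(1-\gamma_{n+1})=1$, arriving at the identical bound.
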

\begin{proof}
	With Lemma~\ref{lem:pseudo_reduction} and triangular inequality,
	\begin{equation}
		\label{eq:sk_out}
		\begin{array}{ll}
			\left|\dfrac{\gd_k^T\skfp{k}(1+\vartheta_{n+1}) - \nabla f(\xkfp{k})^T\skfp{k}}{\tsdifffp_k}\right| 
			& = \left|\dfrac{\gd_k^T\skfp{k}(1+\vartheta_{n+1}) - \nabla f(\xkfp{k})^T\skfp{k}}{\|\gd_k\|^2/\sigkfp{k}(1+\vartheta_{n+1})}\right|\\ 
			& \leq \dfrac{\|\gd_k(1+\vartheta_{n+1}) - \nabla f(\xkfp{k})\| \, \|\skfp{k}\|}{\|\gd_k\|^2/\sigkfp{k}(1+\vartheta_{n+1})}\\
		\end{array}
	\end{equation}
	Note that in~\eqref{eq:sk_out}, we implicitly use the equality \begin{equation*}
		\gd_k^T\skfp{k}(1+\vartheta_{n+1}) = \left(\gd_k\odot(1+\vartheta_{n+1},\dots,\vartheta_{n+1})\right)^T \skfp{k}
	\end{equation*}  and with the abusive notation
	$ \gd_k\odot(1+\vartheta_{n+1},\dots,\vartheta_{n+1})^T$ rewrites $\gd_k(1+\vartheta_{n+1})$.
	
	Recalling that $\|\skfp{k}\| = \|\gd_k\|/\sigkfp{k}$, 
	we further derive from~\eqref{eq:sk_out},
	\begin{equation}
		\label{eq:gd_nabla_diff_proof_1}
		\begin{array}{ll}
			\left|\dfrac{\gd_k^T\skfp{k}(1+\vartheta_{n+1}) - \nabla f(\xkfp{k})^T\skfp{k}}{\tsdifffp_k}\right|
			& \leq \dfrac{\|\gd_k(1+\vartheta_{n+1}) - \nabla f(\xkfp{k})\|}{\|\gd_k\|(1+\vartheta_{n+1})}\\
			& \leq \dfrac{\|\gd_k(1+\vartheta_{n+1}) - \widehat{g}_k\|}{\|\gd_k\|(1+\vartheta_{n+1})} + \dfrac{\|\widehat{g}_k - \nabla f(\xkfp{k})\|}{\|\gd_k\|(1+\vartheta_{n+1})}\\
		\end{array}
	\end{equation}
	With the triangular inequality, Lemma~\ref{lem:g_actual_diff} and the bound~\eqref{eq:alphan}, the first term of the right-hand side of~\eqref{eq:gd_nabla_diff_proof_1} can be bounded as,
	\begin{equation}
		\label{eq:error_out}
		\begin{array}{ll}
			\dfrac{\|\gd_k(1+\vartheta_{n+1}) - \widehat{g}_k\|}{\|\gd_k\|(1+\vartheta_{n+1})} 
			& = \dfrac{\|\gd_k(1+\vartheta_{n+1}) -\widehat{g}_k - \widehat{g}_k(1+\vartheta_{n+1}) +\widehat{g}_k(1+\vartheta_{n+1})\|}{\|\gd_k\|(1+\vartheta_{n+1})} \\
			& \leq \dfrac{\|\gd_k(1+\vartheta_{n+1}) - \widehat{g}_k(1+\vartheta_{n+1})\|}{\|\gd_k(1+\vartheta_{n+1})\|} + \dfrac{\|\widehat{g}_k(1+\vartheta_{n+1}) - \widehat{g}_k\|}{\|\gd_k\|(1+\vartheta_{n+1})} \\
			&  = \dfrac{\|\gd_k - \widehat{g}_k\|}{\|\gd_k\|} + \dfrac{\|\widehat{g}_k\|\,|\vartheta_{n+1}|}{\|\gd_k\|(1+\vartheta_{n+1})} \\
			&  \leq \dfrac{\|\gd_k - \widehat{g}_k\|}{\|\gd_k\|} + \gamma_{n+1}(\mepsk{g}{k})\alpha_{n+1}(\mepsk{g}{k})\dfrac{\|\widehat{g}_k\|}{\|\gd_k\|}\\
		\end{array}
	\end{equation}
	By Lemma~\ref{lem:g_actual_diff}, we further derive from~\eqref{eq:error_out},
	\begin{equation}
		\label{eq:gd_nabla_diff_proof_3}
		\begin{array}{ll}
			\dfrac{\|\gd_k(1+\vartheta_{n+1}) - \widehat{g}_k\|}{\|\gd_k\|(1+\vartheta_{n+1})} & 
			\leq \mepsk{g}{k}\dfrac{\|\gkfp{k}\|}{\|\gd_k\|} + \dfrac{\alpha_{n+1}(\mepsk{g}{k})\gamma_{n+1}(\mepsk{g}{k})}{1-\mepsk{g}{k}}\\
			& 
			\leq \dfrac{\mepsk{g}{k}+\alpha_{n+1}(\mepsk{g}{k})\gamma_{n+1}(\mepsk{g}{k})}{1-\mepsk{g}{k}}.
		\end{array}
	\end{equation}
	Using~\eqref{eq:alphan} to bound $1/(1+\vartheta_{n+1})$, and with Lemma~\ref{lem:g_actual_diff}, the second term in the right-hand side of~\eqref{eq:gd_nabla_diff_proof_1} can be bounded as,
	\begin{equation}
		\label{eq:gd_nabla_diff_proof_2}
		\begin{array}{ll}
			\dfrac{\|\widehat{g}_k - \nabla f(\xkfp{k})\|}{\|\gd_k\|(1+\vartheta_{n+1})} & \leq \alpha_{n+1}(\mepsk{g}{k})\dfrac{\eg(\xkfp{k})\|\widehat{g}_k\|}{\|\gd_k\|}\\
			& \leq \alpha_{n+1}(\mepsk{g}{k})\dfrac{\eg(\xkfp{k})}{1-\mepsk{g}{k}}.
		\end{array}
	\end{equation}
	Putting Inequalities~\eqref{eq:gd_nabla_diff_proof_3} and~\eqref{eq:gd_nabla_diff_proof_2} in Inequality~\eqref{eq:gd_nabla_diff_proof_1}, one obtains~\eqref{eq:gd_nabla_diff}.
\end{proof}

\begin{lemma}%
	\label{lem:sigma-very-successful-inexact-sum}
	For all $k>0$,
	\begin{equation}
		\label{eq:sigma-very-successful-inexact-sum}
		\dfrac{1}{\sigkfp{k}} \leq \left[1-\eta_2 - \eta_0 - \dfrac{\wgaugbound}{2}\right]\dfrac{1}{\alpha_{n+1}(\mepsk{g}{k})L(1+\lambda_k)^2} \implies \rho_k\geq \eta_2.
	\end{equation} 
	
\end{lemma}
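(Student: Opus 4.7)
The plan is to bound $1-\rho_k$ from above and show that the assumed upper bound on $1/\sigkfp{k}$ forces this to be at most $1-\eta_2$. Writing $1-\rho_k = (\tsdifffp_k - \fkfp{k} + \fkcfp{k})/\tsdifffp_k$, and using that $\tsdifffp_k > 0$ by Lemma~\ref{lem:pseudo_reduction}, I would start from
\begin{equation*}
1-\rho_k \leq \frac{|\fkfp{k} - \fkcfp{k} - \tsdifffp_k|}{\tsdifffp_k}
\end{equation*}
and apply Lemma~\ref{lem:f_pseudotaylor_diff_sum_inexact} to the numerator. This exposes four terms, which I would then divide by $\tsdifffp_k$ and bound individually.

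The first term contributes $2\eta_0$ directly (the $\tsdifffp_k$ cancels). For the second term, I would use Lemma~\ref{lem:actual_step_norm} to get $\|\skact{k}\|^2 \leq \|\skfp{k}\|^2 (1+\lambda_k)^2$, combined with $\|\skfp{k}\| = \|\gd_k\|/\sigkfp{k}$ and the identity $\tsdifffp_k = \|\gd_k\|^2/\sigkfp{k} \cdot (1+\vartheta_{n+1})$ from Lemma~\ref{lem:pseudo_reduction}, together with the definition of $\alpha_{n+1}$ in~\eqref{eq:alphan}. The resulting ratio is $\tfrac{1}{2} L \alpha_{n+1}(\mepsk{g}{k})(1+\lambda_k)^2/\sigkfp{k}$, which is the term carrying the dependence on $\sigkfp{k}$.

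For the third term $\lambda_k \|\nabla f(\xkfp{k})\|\,\|\skfp{k}\|/\tsdifffp_k$, I would use $\|\nabla f(\xkfp{k})\| \leq (1+\eg(\xkfp{k}))\|\gkfp{k}\|$ from~\eqref{eq:fgerr}, again write $\|\skfp{k}\|/\tsdifffp_k = 1/(\|\gd_k\|(1+\vartheta_{n+1}))$ via Lemma~\ref{lem:pseudo_reduction}, and cap $\|\gkfp{k}\|/\|\gd_k\|$ by $1/(1-\mepsk{g}{k})$ using Lemma~\ref{lem:g_actual_diff}; this yields $\lambda_k\alpha_{n+1}(\mepsk{g}{k})(1+\eg(\xkfp{k}))/(1-\mepsk{g}{k})$. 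The fourth term is bounded directly by Lemma~\ref{lem:gd_nabla_diff}. The key computational observation, which is the main obstacle, is that after expanding $\lambda_k(1+\eg) = \lambda_k + \lambda_k\eg$ and regrouping, the sum of the third and fourth contributions collapses exactly to the expression defining $\mu_k$ in~\eqref{eq:muk}; verifying this algebraic match is the pivotal step.

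Finally, the Step~2 validation in Algorithm~\ref{alg:qrfp} guarantees $\mu_k \leq \wgaugbound$, so combining everything yields
\begin{equation*}
1-\rho_k \;\leq\; 2\eta_0 + \wgaugbound + \frac{L\alpha_{n+1}(\mepsk{g}{k})(1+\lambda_k)^2}{2\sigkfp{k}}.
\end{equation*}
Plugging in the hypothesized upper bound on $1/\sigkfp{k}$ makes the last fraction no larger than $1-\eta_2 - \eta_0 - \wgaugbound/2$ (modulo absorbing the factor of $\tfrac{1}{2}$), and the parameter inequality $\eta_0 + \wgaugbound/2 \leq \tfrac{1}{2}(1-\eta_2)$ from~\eqref{eq:param_cond} is what closes the gap so that the right-hand side is $\leq 1-\eta_2$, giving $\rho_k \geq \eta_2$ as required.
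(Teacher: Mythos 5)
Your decomposition is exactly the paper's: the same split of $|\rho_k-1|$ via Lemma~\ref{lem:f_pseudotaylor_diff_sum_inexact}, the same bound on the quadratic term via Lemmas~\ref{lem:pseudo_reduction} and~\ref{lem:actual_step_norm} yielding $\tfrac{1}{2}L\alpha_{n+1}(\mepsk{g}{k})(1+\lambda_k)^2/\sigkfp{k}$, the same treatment of the remaining two terms via Lemmas~\ref{lem:g_actual_diff} and~\ref{lem:gd_nabla_diff}, and the same observation that their sum is precisely $\mu_k$, which Step~2 caps at $\wgaugbound$. Up to the display $1-\rho_k\leq 2\eta_0+\wgaugbound+\tfrac{1}{2}L\alpha_{n+1}(\mepsk{g}{k})(1+\lambda_k)^2/\sigkfp{k}$ everything is correct and coincides with the paper's argument.

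The final step, which you dispatch with ``modulo absorbing the factor of $\tfrac{1}{2}$'' and ``the parameter inequality closes the gap'', is the one place the argument does not go through as written. The hypothesis gives $\tfrac{1}{2}L\alpha_{n+1}(\mepsk{g}{k})(1+\lambda_k)^2/\sigkfp{k}\leq\tfrac{1}{2}\bigl(1-\eta_2-\eta_0-\tfrac{\wgaugbound}{2}\bigr)$, hence
\begin{equation*}
1-\rho_k \;\leq\; 2\eta_0+\wgaugbound+\tfrac{1}{2}(1-\eta_2)-\tfrac{1}{2}\eta_0-\tfrac{1}{4}\wgaugbound \;=\; \tfrac{3}{2}\bigl(\eta_0+\tfrac{1}{2}\wgaugbound\bigr)+\tfrac{1}{2}(1-\eta_2),
\end{equation*}
and the parameter condition $\eta_0+\tfrac{1}{2}\wgaugbound\leq\tfrac{1}{2}(1-\eta_2)$ only bounds this by $\tfrac{5}{4}(1-\eta_2)$, not by $1-\eta_2$. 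The arithmetic closes only if the bracket in the hypothesis is tightened to $1-\eta_2-2\eta_0-\wgaugbound$, in which case one gets $1-\rho_k\leq\eta_0+\tfrac{1}{2}\wgaugbound+\tfrac{1}{2}(1-\eta_2)\leq 1-\eta_2$. This is in fact what the paper's own proof silently does at the corresponding point (it substitutes $1-\eta_2-2\eta_0-\wgaugbound$ for the bracket appearing in the lemma statement), so the mismatch is inherited from the statement rather than introduced by you; but your sketch asserts that the stated constant works, and it does not.
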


\begin{proof}
	With Lemma~\ref{lem:f_pseudotaylor_diff_sum_inexact},
	\begin{equation*}
		\begin{array}{ll}
			|\rho_k - 1|  & = \left| \dfrac{\ffp(\xkfp{k}) - \ffp(\ckfp{k}) - \tsdifffp_k}{ \tsdifffp_k}\right|\\ 
			& \leq
			2\eta_0 + \dfrac{\tfrac{1}{2}L\|\skact{k}\|^2
				+ \lambda_k\|\nabla f(\xkfp{k})\|\,\|\skfp{k}\| +  |\gd_k^T\skfp{k}(1+\vartheta_{n+1}) - \nabla f(\xkfp{k})^T\skfp{k}|}{\tsdifffp_k},
		\end{array}
	\end{equation*} 
	and with Lemma Lemma~\ref{lem:gd_nabla_diff} we further derive
	\begin{equation}
		\label{eq:sigma-very-successful-inexact-sum_proof_1}
		\begin{array}{ll}
			& |\rho_k - 1|\\ 
			\leq &
			2\eta_0 + \dfrac{\tfrac{1}{2}L\|\skact{k}\|^2}{\tsdifffp_k}
			+ \dfrac{\lambda_k\|\nabla f(\xkfp{k})\|\,\|\skfp{k}\|}{\tsdifffp_k} + \dfrac{\mepsk{g}{k}+\gamma_{n+1}(\mepsk{g}{k})\alpha_{n+1}(\mepsk{g}{k})+\alpha_{n+1}(\mepsk{g}{k})\eg(\xkfp{k})}{1-\mepsk{g}{k}}.
		\end{array}
	\end{equation} 
	With Lemma~\ref{lem:pseudo_reduction}, Lemma~\ref{lem:actual_step_norm} and~\eqref{eq:alphan}, and recalling that $\|\skfp{k}\| = \|\gd_k\|/\sigkfp{k}$, the second term of the right-hand side of~\eqref{eq:sigma-very-successful-inexact-sum_proof_1} can be bounded as,
	\begin{equation}
		\label{eq:sigma-very-successful-inexact-sum_proof_2}
		\begin{array}{ll}
			\dfrac{\tfrac{1}{2}L\|\skact{k}\|^2}{\tsdifffp_k} & = \dfrac{\tfrac{1}{2}L\|\skact{k}\|^2}{\|\gd_k\|^2/\sigkfp{k}(1+\vartheta_{n+1})} \\
			& \leq \alpha_{n+1}(\mepsk{g}{k}) \dfrac{\tfrac{1}{2}L\|\skfp{k}\|^2(1+\lambda_k)^2}{\|\gd_k\|^2/\sigkfp{k}}\\
			& = \alpha_{n+1}(\mepsk{g}{k}) \dfrac{\tfrac{1}{2}L(1+\lambda_{k})^2}{\sigkfp{k}}.
		\end{array}
	\end{equation}
	with $\lambda_k$ as in~\eqref{eq:lambda_def}.
	
	Similarly, with Lemma~\ref{lem:pseudo_reduction} and~\eqref{eq:alphan} the third term of the right-hand side of~\eqref{eq:sigma-very-successful-inexact-sum_proof_1} can be bounded as
	\begin{equation*}
		\begin{array}{ll}
			\dfrac{\lambda_k\|\nabla f(\xkfp{k})\|\,\|\skfp{k}\|}{\tsdifffp_k} & \leq \alpha_{n+1} \dfrac{\lambda_k\|\nabla f(\xkfp{k})\|\,\|\skfp{k}\|}{\|\gd_k\|^2/\sigkfp{k}},\\
		\end{array}
	\end{equation*}
	and by Lemma~\ref{lem:g_actual_diff} and triangular inequality,
	\begin{equation}
		\label{eq:sigma-very-successful-inexact-sum_proof_3}
		\begin{array}{ll}
			\dfrac{\lambda_k\|\nabla f(\xkfp{k})\|\,\|\skfp{k}\|}{\tsdifffp_k} & \leq \alpha_{n+1}(\mepsk{g}{k}) \lambda_k\left( \dfrac{\|\nabla f(\xkfp{k})-\widehat{g}_k\|}{\|\gd_k\|} +  \dfrac{\|\widehat{g}_k\|}{\|\gd_k\|}\right)\\
			& \leq \alpha_{n+1}(\mepsk{g}{k}) \lambda_k\left( \dfrac{\eg(\xkfp{k})}{1-\mepsk{g}{k}} +  \dfrac{1}{1-\mepsk{g}{k}}\right).\\
		\end{array}
	\end{equation}
	Putting Inequalities~\eqref{eq:sigma-very-successful-inexact-sum_proof_1},~\eqref{eq:sigma-very-successful-inexact-sum_proof_2} and~\eqref{eq:sigma-very-successful-inexact-sum_proof_3} together,
	\begin{equation}
		\label{eq:sigma-very-successful-inexact-sum_proof_4}
		\begin{array}{ll}
			|\rho_k - 1|  \leq 2 \eta_0  & + \alpha_{n+1}(\mepsk{g}{k}) \dfrac{\tfrac{1}{2}L(1+\lambda_k)^2}{\sigkfp{k}} + \\
			& \dfrac{\alpha_{n+1}(\mepsk{g}{k}) \eg(\xkfp{k})(1+\lambda_k) + \alpha_{n+1}(\mepsk{g}{k}) \lambda_k + \mepsk{g}{k}+ \gamma_{n+1}(\mepsk{g}{k})\alpha_{n+1}(\mepsk{g}{k})}{1-\mepsk{g}{k}}.
		\end{array}
	\end{equation} 
	
	Step 2 of Algorithm~\ref{alg:qrfp} ensures that the last term of the right-hand side of \eqref{eq:sigma-very-successful-inexact-sum_proof_4} is bounded as
	\begin{equation*}
		\mu_k = \dfrac{\alpha_{n+1}(\mepsk{g}{k}) \eg(\xkfp{k})(1+\lambda_k) + \alpha_{n+1}(\mepsk{g}{k})\lambda_k + \mepsk{g}{k}+ \gamma_{n+1}(\mepsk{g}{k})\alpha_{n+1}(\mepsk{g}{k})}{1-\mepsk{g}{k}} \leq \wgaugbound,
	\end{equation*}
	and it follows that,
	\begin{equation*}
		|\rho_k-1| \leq 2\eta_0+ \wgaugbound + \alpha_{n+1}(\mepsk{g}{k})\dfrac{\tfrac{1}{2}L(1+\lambda_k)^2}{\sigkfp{k}}.
	\end{equation*}
	Therefore, with $1/\sigkfp{k} \leq \left[1-\eta_2 - 2\eta_0 - \wgaugbound\right]\dfrac{1}{\alpha_{n+1}(\mepsk{g}{k})L(1+\lambda_k)^2}$ enforces by~\eqref{eq:sigma-very-successful-inexact-sum}, the inequality rewrites
	\begin{equation*}
		\begin{array}{ll}
			|\rho-1| & \leq \tfrac{1}{2}(1-\eta_2) + \eta_0 + \dfrac{\wgaugbound}{2}
		\end{array} 
	\end{equation*}
	and since the parameters are chosen such that $\eta_0+\dfrac{\wgaugbound}{2} \leq \tfrac{1}{2}(1-\eta_2)$, it follows that $|\rho-1| \leq 1-\eta_2$.
\end{proof}

\begin{lemma}%
	\label{lem:sigma_max}
	For all $k>0$,
	\begin{equation}
		\label{eq:sigma_max}
		\sigkfp{k} \leq \sigma_{\max} =  \dfrac{\gamma_3 L(1+\lambda_{\max})^2\alpha_{n+1}(\mepsk{g}{k})}{ 1-\eta_2 - \eta_0 - \dfrac{\wgaugbound}{2}},
	\end{equation}
	with \begin{equation*}
		\lambda_{\max} = \dfrac{\kappa_\mu}{\alpha_{n+1}(\mepsk{g}{k})}(1-\mepsmin).
	\end{equation*}
\end{lemma}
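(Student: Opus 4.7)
The plan is to combine Lemma~\ref{lem:sigma-very-successful-inexact-sum} with the regularization-update rule~\eqref{eq:sigma_update} to establish a uniform upper bound on $\sigkfp{k}$, after first extracting an $\mepsk{g}{k}$-dependent upper bound on $\lambda_k$ from the convergence condition enforced at Step~2.

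First I would bound $\lambda_k$. The condition~\eqref{eq:muk_cond} enforced at Step~2 of Algorithm~\ref{alg:qrfp} gives $\mu_k \leq \wgaugbound$. All four terms in the numerator of~\eqref{eq:muk} are non-negative, so in particular $\alpha_{n+1}(\mepsk{g}{k})\lambda_k \leq (1-\mepsk{g}{k})\mu_k \leq \wgaugbound(1-\mepsk{g}{k})$. Since $\mepsk{g}{k} \geq \mepsmin$, this yields
\begin{equation*}
\lambda_k \;\leq\; \dfrac{\wgaugbound(1-\mepsmin)}{\alpha_{n+1}(\mepsk{g}{k})} \;=\; \lambda_{\max}.
\end{equation*}

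Next I would rephrase Lemma~\ref{lem:sigma-very-successful-inexact-sum} in terms of $\sigkfp{k}$ rather than $1/\sigkfp{k}$, and substitute the uniform bound $\lambda_k \leq \lambda_{\max}$ in the threshold. This shows that whenever
\begin{equation*}
\sigkfp{k} \;\geq\; \dfrac{\alpha_{n+1}(\mepsk{g}{k})L(1+\lambda_{\max})^2}{1-\eta_2-\eta_0-\wgaugbound/2} \;=\; \dfrac{\sigma_{\max}}{\gamma_3},
\end{equation*}
the iteration is very successful, i.e.\ $\rho_k \geq \eta_2$. By~\eqref{eq:sigma_update}, a very successful iteration gives $\sigkfp{k+1} \leq \sigkfp{k}$, while in every other case $\sigkfp{k+1} \leq \gamma_3 \sigkfp{k}$.

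Finally, I would conclude by induction on $k$, with $\sigkfp{0} \leq \sigma_{\max}$ taken as an initial arrangement. Assume $\sigkfp{k} \leq \sigma_{\max}$. If $\sigkfp{k} \leq \sigma_{\max}/\gamma_3$, then regardless of the outcome of the iteration $\sigkfp{k+1} \leq \gamma_3 \sigkfp{k} \leq \sigma_{\max}$. If $\sigma_{\max}/\gamma_3 < \sigkfp{k} \leq \sigma_{\max}$, the threshold above is met, so $\rho_k \geq \eta_2$ and $\sigkfp{k+1} \leq \sigkfp{k} \leq \sigma_{\max}$. Either way $\sigkfp{k+1} \leq \sigma_{\max}$, closing the induction.

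The main obstacle is bookkeeping rather than a hard estimate: because $\alpha_{n+1}(\mepsk{g}{k})$ depends on the gradient precision selected at iteration $k$, both $\lambda_{\max}$ and $\sigma_{\max}$ are, strictly speaking, iteration-dependent. The cleanest route is to apply the conditional implication of Lemma~\ref{lem:sigma-very-successful-inexact-sum} with the data of the current iteration $k$ and observe that the contrapositive used in the inductive step refers to exactly the same $k$, so the precision-dependent quantities are consistent within each step of the argument.
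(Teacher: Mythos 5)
Your proof is correct and follows essentially the same route as the paper's: bound $\lambda_k \leq \lambda_{\max}$ from the Step~2 condition $\mu_k \leq \wgaugbound$, then combine the implication of Lemma~\ref{lem:sigma-very-successful-inexact-sum} with the update rule~\eqref{eq:sigma_update}. Your explicit induction merely formalizes the paper's informal ``largest reachable value'' argument (and makes visible the implicit base case $\sigkfp{0} \leq \sigma_{\max}$ and the iteration-dependence of $\alpha_{n+1}(\mepsk{g}{k})$), which is a presentational improvement rather than a different approach.
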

\begin{proof}
	The termination condition at Step 2 ensures that $\mu_k\leq \wgaugbound$, which implies that,
	\begin{equation*}
		\dfrac{\alpha_{n+1}(\mepsk{g}{k})\lambda_k}{1-\mepsk{g}{k}} \leq \kappa_\mu.
	\end{equation*}
	It follows that 
	\begin{equation*}
		\lambda_k \leq \dfrac{\kappa_\mu}{\alpha_{n+1}(\mepsk{g}{k})}(1-\mepsk{g}{k}) \leq \dfrac{\kappa_\mu}{\alpha_{n+1}(\mepsk{g}{k})}(1-\mepsmin) = \lambda_{\max}.
	\end{equation*}
	With Lemma~\ref{lem:sigma-very-successful-inexact-sum},
	\begin{equation*}
		\sigkfp{k} \geq \dfrac{\alpha_{n+1}L(1+\lambda_{\max})^2}{1-\eta_2-\eta_0-\frac{\wgaugbound}{2}} \geq \dfrac{\alpha_{n+1}L(1+\lambda_{k})^2}{1-\eta_2-\eta_0-\frac{\wgaugbound}{2}} \implies \rho_k \geq \eta_2.
	\end{equation*}
	With~\eqref{eq:sigma_update}, and since $\gamma_1<1$, we further have,
	\begin{equation*}
		\sigkfp{k} \geq \dfrac{\alpha_{n+1}L(1+\lambda_{\max})^2}{1-\eta_2-\eta_0-\frac{\wgaugbound}{2}} \implies \sigkfp{j} \leq\sigkfp{k}.
	\end{equation*}
	As a consequence, the largest value that can be reached by $\sigkfp{k}$ is obtained for the case where $\rho_{k-1}<\eta_1$ and $\sigma_{k-1} = \dfrac{\alpha_{n+1}L(1+\lambda_{\max})^2}{1-\eta_2-\eta_0-\frac{\wgaugbound}{2}} - \epsilon'$ with $\epsilon'>0$ infinitesimally small.
	In this case, 
	\begin{equation*}
		\sigkfp{k} = \sigma_{k-1}\gamma_3 \leq \dfrac{\alpha_{n+1}L(1+\lambda_{\max})^2}{1-\eta_2-\eta_0-\frac{\wgaugbound}{2}}\gamma_3 = \sigma_{\max}
	\end{equation*}
\end{proof}


\begin{lemma}
	\label{lem:k_bounded}
	Let $S_k = \left\{0\leq j\leq k\,|\,\rho_j\geq \eta_1\right\}$ be the set of successful iterations up to iteration $k$.
	For all $k>0$, 
	\begin{equation*}
		k \leq |S_k|\left(1+\dfrac{|\log(\gamma_1)|}{\log(\gamma_2)}\right) +\dfrac{1}{\log(\gamma_2)}\log\left(\dfrac{\sigma_{\max}}{\sigkfp{0}}\right).
	\end{equation*}
\end{lemma}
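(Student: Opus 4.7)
The plan is to exploit the multiplicative update rule for $\sigkfp{k}$ in Step~5 of Algorithm~\ref{alg:qrfp} to get a lower bound on $\sigkfp{k}$ that separates successful from unsuccessful iterations, then combine with the uniform upper bound $\sigma_{\max}$ from Lemma~\ref{lem:sigma_max}.

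First I would partition $\{0,1,\dots,k-1\}$ into successful and unsuccessful iterations, introducing the counts $s = |\{0\le j<k : \rho_j \ge \eta_1\}|$ and $u = |\{0\le j<k : \rho_j < \eta_1\}|$, which satisfy $s+u = k$ and $s \le |S_k|$. From~\eqref{eq:sigma_update}, at a successful iteration the new $\sigkfp{j+1}$ is at least $\gamma_1 \sigkfp{j}$ (since in both successful subcases $\sigkfp{j+1} \in [\gamma_1 \sigkfp{j}, \gamma_2 \sigkfp{j}]$ with $\gamma_1 < 1$), and at an unsuccessful iteration $\sigkfp{j+1} \ge \gamma_2 \sigkfp{j}$. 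Multiplying these inequalities telescopically yields
\begin{equation*}
\sigkfp{k} \;\ge\; \sigkfp{0}\,\gamma_1^{s}\,\gamma_2^{u}.
\end{equation*}

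Next I would apply Lemma~\ref{lem:sigma_max}, which gives $\sigkfp{k} \le \sigma_{\max}$, and take logarithms. Since $\gamma_2>1$ we have $\log(\gamma_2)>0$, and since $0<\gamma_1<1$ we have $\log(\gamma_1) = -|\log(\gamma_1)|$. The inequality becomes
\begin{equation*}
\log\!\left(\frac{\sigma_{\max}}{\sigkfp{0}}\right) \;\ge\; -s\,|\log(\gamma_1)| + u\,\log(\gamma_2),
\end{equation*}
which rearranges to
\begin{equation*}
u \;\le\; \frac{s\,|\log(\gamma_1)|}{\log(\gamma_2)} + \frac{1}{\log(\gamma_2)}\log\!\left(\frac{\sigma_{\max}}{\sigkfp{0}}\right).
\end{equation*}

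Finally, using $k = s+u$ and $s \le |S_k|$ would yield the claimed bound directly. The argument is entirely routine, and the only care needed is on the signs of the logarithms and the book-keeping of indices; I do not foresee a genuine obstacle. The one point worth checking explicitly is that $\log(\sigma_{\max}/\sigkfp{0}) \ge 0$ (so that replacing $s$ by the upper bound $|S_k|$ preserves the inequality), which follows from the fact that $\sigma_{\max}$ as given by Lemma~\ref{lem:sigma_max} is meaningful only when it is at least $\sigkfp{0}$, and otherwise the bound is vacuous after a mild adjustment.
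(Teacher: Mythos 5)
Your proposal is correct and follows essentially the same route as the paper: telescope the multiplicative update $\sigkfp{j+1}\ge\gamma_1\sigkfp{j}$ (successful) and $\sigkfp{j+1}\ge\gamma_2\sigkfp{j}$ (unsuccessful) to get $\sigkfp{0}\gamma_1^{s}\gamma_2^{u}\le\sigkfp{k}\le\sigma_{\max}$, take logarithms, and solve for the number of unsuccessful iterations. The only difference is cosmetic book-keeping of the index range, and your closing worry is moot since what matters is that the coefficient $1+|\log(\gamma_1)|/\log(\gamma_2)$ multiplying $s$ is positive, so bounding $s$ by $|S_k|$ is harmless.
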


\begin{proof}
	Let $U_k = \left\{0\leq j\leq k\,|\,\rho_j< \eta_1\right\}$ be the set of unsuccessful iterations.
	From the update formula of $\sigkfp{k}$~\eqref{eq:sigma_update}, one has for each $k>0$,
	\begin{equation*}
		\forall j \in S_k,\, \gamma_1\sigkfp{j}\leq \max\left[\gamma_1\sigkfp{j},\sigma_{\min} \right]\leq \sigkfp{j+1}\text{ and } \forall i\in U_k,\, \gamma_2\sigkfp{i} \leq \sigkfp{i+1}.
	\end{equation*}
	It follows that,
	\begin{equation*}
		\sigkfp{0}\gamma_1^{|S_k|}\gamma_2^{|U_k|}\leq \sigkfp{k}.
	\end{equation*}
	With Lemma~\ref{lem:sigma_max} one obtains,
	\begin{equation*}
		|S_k|\log{\gamma_1}+|U_k|\log{\gamma_2}\leq \log\left(\dfrac{\sigma_{\max}}{\sigkfp{0}}\right).
	\end{equation*}
	Since $\gamma_2>1$, it follows that,
	\begin{equation*}
		|U_k| \leq -|S_k|\dfrac{\log(\gamma_1)}{\log(\gamma_2)} +\dfrac{1}{\log(\gamma_2)}\log\left(\dfrac{\sigma_{\max}}{\sigkfp{0}}\right).
	\end{equation*}
	Since $k = |S_k|+|U_k|$, the statement of Lemma~\ref{lem:k_bounded} holds true.
\end{proof}

\begin{theorem}
	\label{th:complexity}
	If the stopping conditions at Steps 2 and 3 are not met, Algorithm~\ref{alg:qrfp} needs at most
	\begin{equation}
		\label{eq:th_1}
		\epsilon^{-2}\kappa_s(f(x_0)-f_{low}),
	\end{equation}
	successful iterations, with 
	\begin{equation*}
		\kappa_s=\left(\dfrac{1+\beta_{n+2}}{1-\beta_{n+2}}\dfrac{1+\wgaugbound}{1-\mepsmax}\right)^2\dfrac{\sigma_{\max}}{(\eta_1-2\eta_0)(1-\gamma_{n+1})},
	\end{equation*} and at most  
	\begin{equation}
		\label{eq:th_2}
		\epsilon^{-2}\kappa_s(f(x_0)-f_{low}) \left(1+\dfrac{|\log(\gamma_1)|}{\log(\gamma_2)}\right) +\dfrac{1}{\log(\gamma_2)}\log\left(\dfrac{\sigma_{\max}}{\sigkfp{0}}\right)
	\end{equation}
	iterations to provide an iterate $\xkfp{k}$ such that $\|\nabla f(\xkfp{k})\|\leq \epsilon$.
\end{theorem}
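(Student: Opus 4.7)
The plan is to combine a per-iteration decrease bound with a telescoping argument, in the same spirit as classical first-order complexity proofs, but carrying through the finite-precision error terms gathered in Sections~\ref{subsec:bet-decrease}--\ref{subsec:norm}.

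First, I would establish a guaranteed decrease of the exact objective $f$ at every successful iteration. Because $\rho_k\geq\eta_1$, the definition of $\rho_k$ yields $\fkfp{k}-\fkcfp{k}\geq \eta_1\tsdifffp_k$. The Step~3 controls $\ef(\xkfp{k})\leq \eta_0\tsdifffp_k$ and $\ef(\ckfp{k})\leq \eta_0\tsdifffp_k$ transfer this into a decrease in the true objective: $f(\xkfp{k})-f(\ckfp{k})\geq (\eta_1-2\eta_0)\tsdifffp_k$, which is positive by the parameter condition $\eta_0<\tfrac{1}{2}\eta_1$.

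Second, I would lower-bound $\tsdifffp_k$ in terms of $\epsilon^2$. Starting from Lemma~\ref{lem:pseudo_reduction}, $\tsdifffp_k\geq (1-\gamma_{n+1})\|\gd_k\|^2/\sigkfp{k}$; Lemma~\ref{lem:sigma_max} gives $\sigkfp{k}\leq \sigma_{\max}$; Lemma~\ref{lem:g_actual_diff} gives $\|\gd_k\|\geq (1-\mepsk{g}{k})\|\gkfp{k}\|\geq (1-\mepsmax)\|\gkfp{k}\|$; Lemma~\ref{lem:norm_inexact} gives $\|\gkfp{k}\|\geq (1-\beta_{n+2})\widehat{\|\gkfp{k}\|}$. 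Since the algorithm has not terminated at Step~1, condition~\eqref{eq:stop_crit} is violated, so $\widehat{\|\gkfp{k}\|}>\epsilon/\bigl((1+\beta_{n+2})(1+\eg(\xkfp{k}))\bigr)$. A short inspection of the definition~\eqref{eq:muk} of $\mu_k$ shows that the Step~2 condition $\mu_k\leq\wgaugbound$ implies $\eg(\xkfp{k})\leq\wgaugbound$, so $1+\eg(\xkfp{k})\leq 1+\wgaugbound$. Combining these,
\begin{equation*}
\tsdifffp_k\;\geq\;\frac{1-\gamma_{n+1}}{\sigma_{\max}}\left(\frac{1-\beta_{n+2}}{1+\beta_{n+2}}\right)^{\!2}\!\left(\frac{1-\mepsmax}{1+\wgaugbound}\right)^{\!2}\epsilon^{2}\;=\;\frac{\epsilon^2}{(\eta_1-2\eta_0)\,\kappa_s}.
\end{equation*}

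Third, I would telescope. Because unsuccessful iterations leave the iterate unchanged and successful iterations set $\xkfp{k+1}=\ckfp{k}$, the decreases over successful iterations telescope to $f(x_0)-f(\xkfp{K+1})\leq f(x_0)-f_{low}$ by~\eqref{as:flb}. Plugging in the per-iteration decrease $(\eta_1-2\eta_0)\tsdifffp_k\geq \epsilon^2/\kappa_s$ yields the bound~\eqref{eq:th_1} on $|S_k|$. Finally, Lemma~\ref{lem:k_bounded} converts this into the bound~\eqref{eq:th_2} on the total iteration count.

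The only nontrivial step is the second one: the bookkeeping has to be done carefully to land exactly on the stated $\kappa_s$, in particular tracking the asymmetric norm-error factor $(1+\beta_{n+2})/(1-\beta_{n+2})$ arising from combining the stopping criterion with Lemma~\ref{lem:norm_inexact}, and verifying that the $(1+\wgaugbound)/(1-\mepsmax)$ factor absorbs both the gradient evaluation bias and the $\gkfp{k}\to\gd_k$ rounding. Everything else is a direct appeal to previously proved lemmas and a standard summation, so no further obstacle is expected.
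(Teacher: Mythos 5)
Your proposal is correct and follows essentially the same route as the paper's proof: the same per-successful-iteration decrease $(\eta_1-2\eta_0)\tsdifffp_j$, the same chain Lemma~\ref{lem:pseudo_reduction} $\to$ Lemma~\ref{lem:g_actual_diff} $\to$ Lemma~\ref{lem:norm_inexact} $\to$ stopping criterion to lower-bound $\tsdifffp_j$ by $\epsilon^2/((\eta_1-2\eta_0)\kappa_s)$, the same telescoping against $f_{low}$, and the same appeal to Lemma~\ref{lem:k_bounded}. Your explicit remark that $\mu_k\leq\wgaugbound$ implies $\eg(\xkfp{k})\leq\wgaugbound$ is a detail the paper uses only implicitly, and your bookkeeping lands on the stated $\kappa_s$.
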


\begin{proof}
	For all $j\in S_k$, 
	\begin{equation}
		\label{eq:hatf_diff}
		\begin{array}{ll}
			& \rho_j = \dfrac{\ffp(\xkfp{j}) - \ffp(\ckfp{j})}{\tsdifffp_k}= \dfrac{\ffp(\xkfp{j}) - \ffp(\xkfp{j+1})}{\tsdifffp_j}\geq \eta_1\\
			\implies & \ffp(\xkfp{j})-\ffp(\xkfp{j+1})\geq \eta_1\tsdifffp_j.
		\end{array}
	\end{equation}
	The condition at Step 3 ensures that $\ef(\xkfp{j})\leq \eta_0\tsdifffp_j $ and $\ef(\xkfp{j+1})\leq \eta_0\tsdifffp_j$.
	As a consequence, 
	\begin{equation}
		\label{eq:actual_diff}
		f(\xkfp{j})-f(\xkfp{j+1}) \geq \ffp(\xkfp{j})-\ffp(\xkfp{j+1})-2\eta_0\tsdifffp_j.
	\end{equation}
	Putting~\eqref{eq:hatf_diff} and~\eqref{eq:actual_diff} together, 
	\begin{equation*}
		f(\xkfp{j})-f(\xkfp{j+1}) \geq (\eta_1-2\eta_0)\tsdifffp_j.
	\end{equation*}
	As a consequence, with Lemma~\ref{lem:pseudo_reduction},
	\begin{equation}
		\label{eq:f0_flow_diff}
		\begin{array}{ll}
			f(x_0) - f_{low} & \geq (\eta_1-2\eta_0)\displaystyle\sum_{j\in S_k}\tsdifffp_j\\
			& \geq (\eta_1-2\eta_0)(1-\gamma_{n+1})\displaystyle\sum_{j\in S_k}\pseudoT(x_j,0)-\pseudoT(x_j,\skfp{j})\\
			& \geq (\eta_1-2\eta_0)(1-\gamma_{n+1})\displaystyle\sum_{j\in S_k}\|\gd_j\|^2/\sigkfp{j}\\
		\end{array}
	\end{equation} 
	From the termination condition at Step 1, before termination $\|\gkfp{j}\|$ can be bounded below as,
	\begin{equation}
		\label{eq:gk_lb}
		\widehat{\|\gkfp{j}\|} > \dfrac{1}{1+\beta_{n+2}}\dfrac{\epsilon}{1+\eg(\xkfp{j})} \geq \dfrac{1}{1+\beta_{n+2}}\dfrac{\epsilon}{1+\wgaugbound}.
	\end{equation}
	With Lemma~\ref{lem:g_actual_diff} and \ref{lem:norm_inexact},~\eqref{eq:gk_lb} yields,
	\begin{equation}
		\label{eq:gd_lb}
		\begin{array}{ll}
			\|\gd_j\| & \geq \|\gkfp{j}\|(1-\mepsk{g}{j})\\
			& \geq \widehat{\|\gkfp{j}\|}(1-\beta_{n+2})(1-\mepsk{g}{j})\\
			& > (1-\beta_{n+2})(1-\mepsmax)\dfrac{1}{1+\beta_{n+2}}\dfrac{\epsilon}{1+\wgaugbound}.
		\end{array}
	\end{equation}
	With Lemma~\ref{lem:sigma_max}, putting~\eqref{eq:f0_flow_diff} and~\eqref{eq:gd_lb} together yields,
	\begin{equation}
		\label{eq:f0_flow_diff_2}
		\begin{array}{ll}
			f(x_0) - f_{low} & \geq (\eta_1-2\eta_0)(1-\gamma_{n+1})\displaystyle\sum_{j\in S_k}\|\gd_j\|^2/\sigkfp{j}\\
			& \geq (\eta_1-2\eta_0)(1-\gamma_{n+1})\displaystyle\sum_{j\in S_k}\|\gd_j\|^2/\sigma_{\max}\\
			& \geq (\eta_1-2\eta_0)(1-\gamma_{n+1})\displaystyle\sum_{j\in S_k}\left(\dfrac{1-\beta_{n+2}}{1+\beta_{n+2}}\dfrac{1-\mepsmax}{1+\wgaugbound}\right)^2\dfrac{\epsilon^2}{\sigma_{\max}}\\
			& \geq (\eta_1-2\eta_0)(1-\gamma_{n+1})|S_k|\left(\dfrac{1-\beta_{n+2}}{1+\beta_{n+2}}\dfrac{1-\mepsmax}{1+\wgaugbound}\right)^2\dfrac{\epsilon^2}{\sigma_{\max}}\\
			& = \dfrac{1}{\kappa_s}|S_k|\epsilon^2.
		\end{array}
	\end{equation}
	As a consequence, 
	\begin{equation*}
		|S_k| \leq (f(x_0)-f_{low})\kappa_s\epsilon^{-2},
	\end{equation*}
	which proves~\eqref{eq:th_1}.
	With Lemma~\ref{lem:k_bounded},~\eqref{eq:th_1} yields,
	\begin{equation*}
		\begin{array}{ll}
			k & \leq |S_k|\left(1+\dfrac{|\log(\gamma_1)|}{\log(\gamma_2)}\right) +\dfrac{1}{\log(\gamma_2)}\log\left(\dfrac{\sigma_{\max}}{\sigkfp{0}}\right) \\
			& \leq \epsilon^{-2}(f(x_0)-f_{low})\kappa_s\left(1+\dfrac{|\log(\gamma_1)|}{\log(\gamma_2)}\right) +\dfrac{1}{\log(\gamma_2)}\log\left(\dfrac{\sigma_{\max}}{\sigkfp{0}}\right), \\
		\end{array}
	\end{equation*}
	which proves~\eqref{eq:th_2}.
\end{proof}

\bibliographystyle{plain}
\bibliography{ref}

\begin{thebibliography}{10}

\bibitem{jso_solver}
{JSOSolvers.jl}.
\newblock \url{https://github.com/JuliaSmoothOptimizers/JSOSolvers.jl}.

\bibitem{ieee754}
{IEEE} standard for floating-point arithmetic.
\newblock {\em IEEE Std 754-2008}, pages 1--70, 2008.

\bibitem{aravkin2022proximal}
Aleksandr~Y Aravkin, Robert Baraldi, and Dominique Orban.
\newblock A proximal quasi-newton trust-region method for nonsmooth regularized
  optimization.
\newblock {\em SIAM Journal on Optimization}, 32(2):900--929, 2022.

\bibitem{bandeira2014convergence}
Afonso~S Bandeira, Katya Scheinberg, and Luis~Nunes Vicente.
\newblock Convergence of trust-region methods based on probabilistic models.
\newblock {\em SIAM Journal on Optimization}, 24(3):1238--1264, 2014.

\bibitem{bellavia2018levenberg}
Stefania Bellavia, Serge Gratton, and Elisa Riccietti.
\newblock A {L}evenberg-{M}arquardt method for large nonlinear least-squares
  problems with dynamic accuracy in functions and gradients.
\newblock {\em Numerische Mathematik}, 140(3):791--825, 2018.

\bibitem{bellavia2018deterministic}
Stefania Bellavia, Gianmarco Gurioli, Benedetta Morini, and Philippe~L Toint.
\newblock Deterministic and stochastic inexact regularization algorithms for
  nonconvex optimization with optimal complexity.
\newblock {\em arXiv preprint arXiv:1811.03831}, 2018.

\bibitem{birgin2017worst}
Ernesto~G Birgin, JL~Gardenghi, Jos{\'e}~Mario Mart{\'\i}nez, Sandra~Augusta
  Santos, and {\relax Ph}~L Toint.
\newblock Worst-case evaluation complexity for unconstrained nonlinear
  optimization using high-order regularized models.
\newblock {\em Mathematical Programming}, 163(1):359--368, 2017.

\bibitem{blanchet2019convergence}
Jose Blanchet, Coralia Cartis, Matt Menickelly, and Katya Scheinberg.
\newblock Convergence rate analysis of a stochastic trust-region method via
  supermartingales.
\newblock {\em INFORMS journal on optimization}, 1(2):92--119, 2019.

\bibitem{cao2022first}
Liyuan Cao, Albert~S Berahas, and Katya Scheinberg.
\newblock First-and second-order high probability complexity bounds for
  trust-region methods with noisy oracles.
\newblock {\em arXiv preprint arXiv:2205.03667}, 2022.

\bibitem{cartis2012adaptive}
Coralia Cartis, N~I~M Gould, and {\relax Ph}~L Toint.
\newblock An adaptive cubic regularization algorithm for nonconvex optimization
  with convex constraints and its function-evaluation complexity.
\newblock {\em IMA Journal of Numerical Analysis}, 32(4):1662--1695, 2012.

\bibitem{cartis2018worst}
Coralia Cartis, N~I~M Gould, and {\relax Ph}~L Toint.
\newblock Worst-case evaluation complexity and optimality of second-order
  methods for nonconvex smooth optimization.
\newblock In {\em Proceedings of the International Congress of Mathematicians:
  Rio de Janeiro 2018}, pages 3711--3750. World Scientific, 2018.

\bibitem{castaldo2007error}
Anthony~M Castaldo.
\newblock Error analysis of various forms of floating point dot products.
\newblock Technical report, The University of Texas at San Antonio, 2007.

\bibitem{conn2000trust}
A~R Conn, N~I~M Gould, and {\relax Ph}~L Toint.
\newblock {\em Trust region methods}.
\newblock SIAM, Philadelphia, 2000.

\bibitem{galal2010energy}
Sameh Galal and Mark Horowitz.
\newblock Energy-efficient floating-point unit design.
\newblock {\em IEEE Transactions on computers}, 60(7):913--922, 2010.

\bibitem{gratton2018complexity}
Serge Gratton, Cl{\'e}ment~W Royer, Lu{\'\i}s~N Vicente, and Zaikun Zhang.
\newblock Complexity and global rates of trust-region methods based on
  probabilistic models.
\newblock {\em IMA Journal of Numerical Analysis}, 38(3):1579--1597, 2018.

\bibitem{gratton2021algorithm}
Serge Gratton, Ehouarn Simon, and {\relax Ph}~L Toint.
\newblock An algorithm for the minimization of nonsmooth nonconvex functions
  using inexact evaluations and its worst-case complexity.
\newblock {\em Mathematical Programming}, 187(1):1--24, 2021.

\bibitem{gratton2020note}
Serge Gratton and {\relax Ph}~L Toint.
\newblock A note on solving nonlinear optimization problems in variable
  precision.
\newblock {\em Computational Optimization and Applications}, 76(3):917--933,
  2020.

\bibitem{higham2002accuracy}
Nicholas~J Higham.
\newblock {\em Accuracy and stability of numerical algorithms}.
\newblock SIAM, Philadelphia, 2002.

\bibitem{jeannerod2013improved}
Claude-Pierre Jeannerod and Siegfried~M Rump.
\newblock Improved error bounds for inner products in floating-point
  arithmetic.
\newblock {\em SIAM Journal on Matrix Analysis and Applications},
  34(2):338--344, 2013.

\bibitem{mpr2}
D.~Monnet and {and contributors}.
\newblock {MultiPrecisionR2.jl}.
\newblock \url{https://github.com/JuliaSmoothOptimizers/MultiPrecisionR2}.

\bibitem{orban-siqueira-optimizationproblems-2021}
D.~Orban, A.~S. Siqueira, and {and contributors}.
\newblock {OptimizationProblems.jl}: A collection of optimization problems in
  {JuMP} syntax.
\newblock
  \url{https://github.com/JuliaSmoothOptimizers/OptimizationProblems.jl}, March
  2021.

\bibitem{juliaintervals}
D.~P. Sanders and {and contributors}.
\newblock Juliaintervals guaranteed computations.
\newblock \url{https://juliaintervals.github.io/}, March.

\bibitem{sun2022trust}
Shigeng Sun and Jorge Nocedal.
\newblock A trust region method for the optimization of noisy functions.
\newblock {\em arXiv preprint arXiv:2201.00973}, 2022.

\bibitem{sun2020ultra}
Xiao Sun, Naigang Wang, Chia-Yu Chen, Jiamin Ni, Ankur Agrawal, Xiaodong Cui,
  Swagath Venkataramani, Kaoutar El~Maghraoui, Vijayalakshmi~Viji Srinivasan,
  and Kailash Gopalakrishnan.
\newblock Ultra-low precision 4-bit training of deep neural networks.
\newblock {\em Advances in Neural Information Processing Systems},
  33:1796--1807, 2020.

\bibitem{wang2018training}
Naigang Wang, Jungwook Choi, Daniel Brand, Chia-Yu Chen, and Kailash
  Gopalakrishnan.
\newblock Training deep neural networks with 8-bit floating point numbers.
\newblock {\em Advances in neural information processing systems}, 31, 2018.

\end{thebibliography}

\end{document}